\def\draft{n}
\newtheorem{theorem}{Theorem}[section]
\newtheorem{proposition}{Proposition}[section]
\theoremstyle{definition}
\newtheorem{lemma}[proposition]{Lemma}
\newtheorem{definition}[proposition]{Definition}
\newtheorem{remark}[proposition]{Remark}
\newtheorem{corollary}[proposition]{Corollary}
\newtheorem{question}[proposition]{Question}
\newcommand{\qbinom}[2]{\genfrac{[}{]}{0pt}{}{#1}{#2}}
\def\printname#1{
        \if\draft y
                \smash{\makebox[0pt]{\hspace{-0.5in}
                        \raisebox{8pt}{\tt\tiny #1}}}
        \fi
}
\newcommand{\psdraw}[2]
         {\begin{array}{c} \hspace{-1.3mm}
        \raisebox{-4pt}{\epsfig{figure=draws/#1.eps,width=#2}}
        \hspace{-1.9mm}\end{array}}
\newlength{\standardunitlength}
\long\def\@makecaption#1#2{%
     \vskip 10pt

\setbox\@tempboxa\hbox{
       \small\sf{\bfcaptionfont #1. }\ignorespaces #2}%
     \ifdim \wd\@tempboxa >\captionwidth {%
         \rightskip=\@captionmargin\leftskip=\@captionmargin
         \unhbox\@tempboxa\par}%
       \else
         \hbox to\hsize{\hfil\box\@tempboxa\hfil}%
     \fi}
\font\bfcaptionfont=cmssbx10 scaled \magstephalf
\newdimen\@captionmargin\@captionmargin=2\parindent
\newdimen\captionwidth\captionwidth=\hsize
\def\lbl#1{\label{#1}\printname{#1}}
\def\BN{\mathbb N}
\def\BZ{\mathbb Z}
\def\BQ{\mathbb Q}
\def\BC{\mathbb C}
\def\a{\alpha}
\def\l{\lambda}
\def\Ga{\Gamma}
\def\s{\sigma}
\def\ga{\gamma}
\def\la{\langle}
\def\ra{\rangle}
\def\Ga{\Gamma}
\def\b{\beta}
\def\s{\sigma}
\def\longto{\longrightarrow}
\def\pt{\partial}
\def\z{\zeta}
\def\fsl{\mathfrak{sl}}
\def\rra{\ra\hspace{-2pt}\ra}
\def\lla{\la\hspace{-2pt}\la}
\begin{document}


\title[Asymptotics of quantum spin networks at a fixed root of unity]{
Asymptotics of quantum spin networks at a fixed root of unity}
\author{Stavros Garoufalidis}
\author{Stavros Garoufalidis}
\address{School of Mathematics \\
         Georgia Institute of Technology \\
         Atlanta, GA 30332-0160, USA \newline 
         {\tt \url{http://www.math.gatech.edu/~stavros }}}
\email{stavros@math.gatech.edu}
\author{Roland van der Veen}
\address{Korteweg de Vries Institute for Mathematics \\
         University of Amsterdam \\
         P.O. Box 94248, 1090 GE Amsterdam, The Netherlands  \newline 
         {\tt \url{http://www.science.uva.nl/$\sim$riveen}}}
\email{r.i.vanderveen@uva.nl}

\thanks{S.G. was supported in part by NSF. \\
\newline
1991 {\em Mathematics Classification.} Primary 57N10. Secondary 57M25.
\newline
{\em Key words and phrases: Spin networks, quantum spin networks, 
ribbon graphs, $6j$-symbols, Racah coefficients, angular momentum, 
asymptotics, $G$-functions, Kauffman bracket, Jones polynomial, 
Wilf-Zeilberger theory, holonomic sequences, $q$-holonomic sequences.
}
}

\date{November 3, 2010 }


\begin{abstract}
A classical spin network consists of a ribbon graph (i.e.,
an abstract graph with a cyclic ordering
of the vertices around each edge) and an admissible coloring of 
its edges by natural numbers. The standard evaluation of a spin network
is an integer number. In a previous paper, we proved an existence theorem
for the asymptotics of the standard evaluation of an arbitrary classical 
spin network when the coloring of its edges are scaled by a large natural 
number. In the present paper, we extend the results to the case of an
evaluation of quantum spin networks of arbitrary valency 
at a fixed root of unity. As in the classical case, our proofs use the
theory of $G$-functions of Andr\'e, together with some new results
concerning holonomic and $q$-holonomic sequences of Wilf-Zeilberger.   
\end{abstract}

\maketitle

\tableofcontents

\section{Introduction}
\lbl{sec.intro}

\subsection{History}
\lbl{sub.history}

Spin networks originally arose from calculations of angular momentum in 
quantum mechanics \cite{VMK}. 
They were formalized in the sixties by Penrose \cite{Pe1,Pe2} in attempt 
to quantize gravity combinatorially. 
Similar ideas were developed by Ponzano and Regge who related the 
semi-classical expansion of 3D gravity
to the Regge action, \cite{BL1,BL2,PR,Wi}. In recent years, 
spin networks have played an important role in the development of loop 
quantum gravity; see 
\cite{BC,EPR,Pz,Ro} and references therein. In the eighties, quantum spin 
networks were used by Reshetikhin, Turaev and Viro
as building blocks of combinatorially defined invariants
of knotted 3-dimensional objects; see \cite{Tu,TV}. Those topological 
invariants that are collectively called TQFT generalize the famous
Jones polynomial of a knot \cite{J}.

A key problem in classical and quantum spin networks is their asymptotic
behavior for large spins. In the case of the $6j$-symbols Ponzano-Regge
conjectured the leading term of an explicit asymptotic expansion and gave
ample numerical evidence; see \cite{PR}. The Ponzano-Regge conjecture 
was proven by Roberts \cite{Rb} using methods of geometric quantization.
The existence of a general asymptotic expansion for all classical spin
networks (to all orders in perturbation theory, in a constructive way)
was recently obtained by the authors in \cite{GV}. The method of \cite{GV}
was to convert the problem of asymptotic expansions to questions in
Algebraic Geometry and Number Theory, and use the highly developed theory of 
$G$-functions. An important part in this conversion was the use of 
holonomic functions, developed by Zeilberger.

\subsection{Classical spin networks}
\lbl{sub.cspin}

A {\em classical spin network} consists of a ribbon graph $\Ga$ (i.e.,
an abstract graph with a cyclic ordering of the vertices around 
each edge) and an admissible coloring $\ga$ of its edges by natural numbers. 
The standard evaluation $\la \Ga,\ga \ra$ of a spin network is an 
integer number. In a previous paper \cite{GV}, 
we proved an existence theorem for the asymptotics of the sequence
$\la \Ga, n\ga \ra$ of an arbitrary trivalent classical spin network when the 
coloring of its edges is scaled by a large natural 
number $n$. In addition, we presented several ways (algebro-geometric,
combinatorial, and numerical) for computing the asymptotics of 
$\la \Ga, n\ga \ra$ for large $n$. 

The goal of the present paper is to extend the results of \cite{GV}
to the case of quantum spin networks 
at a fixed root of unity. As in the case of classical spin networks, 
our proofs use the theory of $G$-functions of Andr\'e. There are two new
ingredients which allow us to use the theory of $G$-functions. They
 come from the theory of holonomic and $q$-holonomic
functions of Wilf-Zeilberger:
\begin{itemize}
\item[(a)]
Theorem \ref{thm.der} which states that the derivative of a $q$-holonomic 
sequence with respect to $q$ is $q$-honolomic.
\item[(b)]
Theorem \ref{thm.qholo} which states that the evaluation
of a $q$-holonomic sequence at a fixed root of unity is holonomic.  
\end{itemize}

\subsection{Quantum spin networks}
\lbl{sub.qspin}

Quantum spin networks differ from their classical versions in two ways:
\begin{itemize}
\item[(a)] the underlying graphs $\Ga$ are knotted 
(i.e., embedded in $S^3$) and not abstract, 
\item[(b)] their evaluations are polynomials of $q$, and not 
simply integer numbers.
\end{itemize}
Recall that a {\em knotted ribbon graph} is an embedded framed graph in 
$S^3$, of arbitrary valency,
together with a cyclic ordering of the edges around each vertex. We will 
restrict ourselves to integer framings
so that thickening the graph gives rise to an orientable surface. A {\em 
quantum spin network} consists of 
a knotted ribbon graph $\Ga$ 
together with a pair of functions $\ga=(\ga_E,\ga_V)$. Here 
$\ga_E: \mathrm{Edges}(\Ga)
\longto\BN$ is an admissible coloring $\ga$ of its edges, and 
$\ga_V: \mathrm{Vert}(\Ga)\longto\{\mathrm{projectors}\}$ 
a choice of a local projector at each vertex of $\Ga$. 
Local projectors are explained in detail in Section 
\ref{sec.eval}, and can be ignored in case $\Ga$ is a trivalent graph.

The {\em standard evaluation} $\la \Ga,\ga \ra(q)$ of a quantum spin network 
is a rational function of $q^{1/4}$; see \cite{Tu}. 
Recently, Costantino \cite{Co} obtained an integrality
result for the standard evaluation of a quantum spin network, namely:
\begin{equation}
\lbl{eq.costa}
\la \Ga,\ga \ra(q) \in q^{\frac{n}{4}}\BZ[q^{\pm \frac{1}{2}}]
\end{equation} 
For some $n$ depending on the network. Our first result deals with fixing 
the fractional power of $q$.

\begin{proposition}
\lbl{prop.1}
For every quantum spin network $(\Ga,\ga)$ there exist a $\BZ/4\BZ$-valued
quadratic form $Q(\ga)$ such that
\begin{equation}
\lbl{eq.0frame}
\la \Ga,\ga\ra(q)=q^{\frac{1}{4} Q(\ga)} \lla \Ga,\ga\rra(q), 
\qquad \lla \Ga,\ga\rra(q) \in \BZ[q^{\pm 1}].
\end{equation}
\end{proposition}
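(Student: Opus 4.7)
The plan is to compute $\la\Ga,\ga\ra(q)$ via the Kauffman bracket with $A=q^{1/4}$ and identify $Q(\ga)$ as the common residue modulo $4$ of all $A$-exponents appearing in the bracket expansion.

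I would first fix a diagram $D$ of $\Ga$ whose blackboard framing matches the prescribed framings. The standard evaluation is computed by cabling each edge $e$ of $D$ into $n_e$ parallel strands, inserting the Jones-Wenzl idempotent $p_{n_e}$ on each cable and the chosen projector at each vertex, and applying the Kauffman skein relation. By standard results (see \cite{Tu}), the matrix entries of $p_{n_e}$ and of any admissible vertex projector lie in $\BZ[A^{\pm 2}]=\BZ[q^{\pm 1/2}]$; in particular they contribute only even-degree monomials in $A$, and hence preserve the residue of the $A$-exponent modulo $4$.

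The key point is that for any planar link diagram $L$, every monomial appearing in the Kauffman bracket $\la L\ra_A$ has $A$-exponent in a single residue class modulo $4$. Indeed, flipping the smoothing at one crossing changes the $A$-exponent by $\mp 2$ via the skein relation and changes the loop count by $\pm 1$, contributing $\pm 2$ from the loop value $-A^2-A^{-2}$; the net change is $0$ or $\pm 4$. Applied to the projected, cabled diagram representing $(\Ga,\ga)$, this gives a well-defined $Q(\ga)\in\BZ/4\BZ$ which is automatically a framed isotopy invariant of $(\Ga,\ga)$ since $\la\cdot\ra_A$ itself is. Combined with Costantino's containment (\ref{eq.costa}), one obtains $\la\Ga,\ga\ra(q)=q^{Q(\ga)/4}\lla\Ga,\ga\rra(q)$ with $\lla\Ga,\ga\rra(q)\in\BZ[q^{\pm 1}]$, as claimed.

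The main obstacle will be showing that $Q(\ga)$ is the mod-$4$ reduction of an honest quadratic polynomial in $\ga_E$. Reading off the $A$-exponent of the convenient all-$A$-smoothings state of the cabled diagram, and tracking the contributions of the twist eigenvalue $(-1)^n q^{n(n+2)/4}$ at each curl of the blackboard framing and the leading monomial $q^{\pm ab/4}$ at each cabled crossing of distinct edges, gives the candidate
$$
Q(\ga)\ \equiv\ \sum_{e}F_e\,n_e(n_e+2)\ +\ 2\!\!\sum_{\{e,e'\}:\,e\ne e'}\!\!\ell_{e,e'}\,n_e\,n_{e'}\pmod{4},
$$
with $F_e$ the framing of edge $e$ and $\ell_{e,e'}$ the signed linking between distinct edges, both intrinsic to $\Ga$. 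This is manifestly quadratic in $\ga_E$. Verifying the formula requires a careful local analysis of the loop structure of the all-$A$-smoothing state, in which the linear $2n_e$ corrections from the twist eigenvalue must be shown to cancel against the $-2$ shifts arising from the $(-A^2-A^{-2})^{k-1}$ loop contribution; this bookkeeping is the delicate technical step of the proof.
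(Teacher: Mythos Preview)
Your approach is genuinely different from the paper's. The paper proves the proposition via the \emph{shadow state-sum formula} \eqref{eq.shadow}: it computes the residue $\phi\in\BZ/4\BZ$ of each building block ($1j$, $3j$, $6j$-symbol, crossing), and then shows that every summand in the shadow sum has the same $\phi$ by checking that raising one face label $r$ by $2$ changes $\phi$ by a multiple of $4$. The quadratic form $Q(\ga)$ is then read off from the building-block values, which are visibly quadratic in the edge labels. Your route through the Kauffman-bracket state model is more elementary and avoids the shadow machinery entirely; when it works, it gives a cleaner explanation of why a single residue class occurs.

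There is, however, a real gap in your argument at the idempotent step. You assert that the matrix entries of $p_{n_e}$ lie in $\BZ[A^{\pm 2}]$ and that this ``preserves the residue of the $A$-exponent modulo $4$''. But membership in $\BZ[A^{\pm 2}]$ only controls the residue modulo $2$, not modulo $4$. Concretely, with the paper's unnormalized projector on two strands one finds $(A^2+A^{-2})\cdot\mathrm{id}+1\cdot(\cup\cap)$ in the Temperley--Lieb basis, whose coefficients sit in residues $2$ and $0$ respectively. Since the evaluation $\la\Ga,\ga\ra^P$ is a $\BZ[A^{\pm1}]$-\emph{linear combination} of brackets of distinct link diagrams (one for each choice of $\sigma$ at each edge), your ``key point'' about a \emph{single} diagram does not immediately apply; you must also show that all summands land in the same residue class after weighting by the idempotent coefficients.

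This gap can be repaired, but it takes an extra argument you did not give: passing from $\beta_\sigma$ to $\beta_{\sigma s_i}$ (with $\ell$ increasing by $1$) multiplies the coefficient by $A^{-3}$ and replaces one $||$ by a negative crossing; the latter shifts the bracket residue by $-1$ (since $||$ and $\cup\cap$ insertions differ in loop count by $\pm1$, hence in residue by $2$), giving a net shift of $-4\equiv 0$. With this, your proof of the existence of $Q(\ga)\in\BZ/4\BZ$ goes through. Your candidate explicit formula for $Q(\ga)$ via framings and linking numbers is plausible, but as you acknowledge, the cancellation of the linear terms is not verified; the paper sidesteps this by reading $Q(\ga)$ directly off the manifestly quadratic $\phi$-values of the shadow building blocks.
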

The above proposition allows us to define a modified evaluation  
$\lla \Ga,\ga \rra(\zeta)\in \BC$ that can be evaluated a 
{\em fixed root of unity} $\zeta$. 
To make $\lla \Ga,\ga \rra$ well defined we use the convention that 
$0\leq Q(\ga)\leq 3$.
When $\zeta=1$, it follows from the definitions that $\lla \Ga,\ga \rra(1)$ 
equals the classical
evaluation and is hence independent of the embedding of $\Ga$ in 3-space.
The goal of this paper is to extend the results of \cite{GV} to the 
case of the standard evaluation of a quantum spin network at a {\em fixed}
root of unity $\zeta$.

\subsection{Sequences of Nilsson type and $G$-functions}
\lbl{sub.prelim}

To state our results, we need to recall what is a $G$-{\em function} and what
is a sequence of {\em Nilsson type}. Sequences of Nilsson type 
are discussed in detail in \cite{Ga4}. We recall some definitions here
for the convenience of the reader.

\begin{definition}
\lbl{def.nilsson}
We say that a sequence $(a_n)$ is of 
{\em Nilsson type} if it has an {\em asymptotic expansion} of the form 
\begin{equation}
\lbl{eq.nilsson}
a_n \sim \sum_{\l,\a,\b} \l^{n} n^{\a} (\log n)^{\b} S_{\l,\a,\b}
h_{\l,\a,\b}\left(\frac{1}{n}\right)
\end{equation}
where 
\begin{itemize}
\item[(a)]
the summation is over a finite set of triples $(\l,\a,\b)$
\item[(b)]
the {\em growth rates} $\l$ are algebraic numbers of equal absolute value,
\item[(c)]
the {\em exponents} $\a$ are rational and the {\em nilpotency exponents} 
$\b$ are natural numbers,
\item[(d)]
the {\em Stokes constants} $S_{\l,\a,\b}$ are complex numbers,
\item[(e)]
the formal power series $h_{\l,\a,\b}(x) \in K[[x]]$ 
are Gevrey-1 (i.e., the coefficient of $x^n$ is bounded 
by $C^n n!$ for some $C>0$),
\item[(f)]
$K$ is a  {\em number field} generated by the coefficients of 
$h_{\l,\a,\b}(x)$ for all $\l,\a,\b$. 
\end{itemize}
\end{definition}

\begin{definition}
\lbl{def.Gfunction}
We say that series $G(z)=\sum_{n=0}^\infty a_n z^n$ is a {\em $G$-function}
if 
\begin{itemize}
\item[(a)]
the coefficients $a_n$ are algebraic numbers,
\item[(b)]
there exists a constant $C>0$ so that for every $n \in \BN$
the absolute value of every Galois conjugate of $a_n$ is less than or equal to 
$C^n$, 
\item[(c)]
the common denominator of $a_0,\dots, a_n$ is less than or equal 
to $C^n$, (where the common denominator $d$ of $a_0,\dots, a_n$ is 
the least natural number such that $d a_i$ is an algebraic integer for 
$i=1,\dots,n$),
\item[(d)]
$G(z)$ is holonomic, i.e., it satisfies a linear differential equation
with coefficients polynomials in $z$.
\end{itemize}
\end{definition}

The following connection between sequences of Nilsson type and $G$-functions
was observed in  \cite[Prop.2.5]{Ga3}.

\begin{theorem}
\lbl{thm.Gtaylor}\cite[Prop.2.5]{Ga3}
If $G(z)=\sum_{n=0}^\infty a_n z^n$ is a $G$-function, then $(a_n)$ is
a sequence of Nilsson type.
\end{theorem}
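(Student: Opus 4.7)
The plan is to combine André's structure theorem for $G$-functions with classical singularity analysis. First, I would invoke the theorem of André that the minimal linear ODE $L\cdot G=0$ satisfied by $G(z)$ is a $G$-operator; in particular all of its finite singularities lie in $\overline{\BQ}$, are regular singular, and have rational exponents. Consequently, in a punctured neighborhood of each such singular point $z_0$ the space of local solutions is spanned by finitely many functions of the form $(z-z_0)^{\a}(\log(z-z_0))^{\b} f(z-z_0)$, with $\a\in\BQ$, $\b\in\BN$, and $f$ analytic at $0$ with Taylor coefficients in a fixed number field $K$.

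Second, since $G(z)$ is a particular solution, it admits a $K$-linear decomposition into such local blocks at each finite singularity. The dominant singularities $z_0=1/\l$ (those on the boundary of the disc of convergence) all have the same modulus, so the corresponding $|\l|$'s are equal, supplying condition (b) of Definition \ref{def.nilsson}. I would then apply the transfer theorem of Flajolet--Odlyzko (or the classical Darboux method) to each block: a factor $(1-\l z)^{\a}(-\log(1-\l z))^{\b} f(1-\l z)$ contributes to $a_n$ an asymptotic term of the form $\l^{n} n^{-\a-1} (\log n)^{\b} h(1/n)$ with $h\in K[[x]]$, plus a remainder which, coming from the subdominant singularities, decays exponentially faster than $|\l|^{n}$. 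Summing over the finitely many dominant singularities produces the asymptotic shape required by \eqref{eq.nilsson}, with growth rates, exponents and nilpotency exponents as demanded by (b)--(d).

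Third, to verify the Gevrey-1 condition on the formal series $h_{\l,\a,\b}(x)$ I would use the arithmetic half of André's theorem: the formal Frobenius-type expansions of a $G$-function at a regular singular point are again $G$-functions up to the $(z-z_0)^{\a}(\log)^{\b}$ prefactors, so their Taylor coefficients satisfy $G$-function denominator and size bounds. These bounds, transported through the explicit transfer formula involving ratios of Gamma functions, translate into a $C^{n} n!$ bound on the coefficients of the $1/n$-expansion, which is precisely the Gevrey-1 condition (e). The field $K$ in (f) is then the compositum of the fields generated by the finitely many $\l$, $\a$, $\b$ and by the Taylor coefficients of the local $f$'s, hence a number field.

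The main obstacle is the arithmetic step: pushing $G$-function bounds from the Taylor series of $G(z)$ at the origin to the Frobenius-type expansions at the other singularities is the deep input and requires the full strength of André's theorem on $G$-operators; the analytic transfer, by contrast, is routine once the local structure and regular singularity are in hand. A secondary bookkeeping issue is the handling of several dominant singularities on the same circle (one must collect their contributions $\l$-by-$\l$) and verifying that the Nilsson exponent $-\a-1$ is rational, which is immediate from the rationality of the local exponents $\a$ furnished by André.
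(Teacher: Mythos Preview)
The paper does not prove this theorem; it is quoted from \cite{Ga3} (Proposition 2.5 there) and used as a black box. Your sketch is essentially the argument behind that reference: Andr\'e's theorem that the minimal differential operator annihilating a $G$-function is a $G$-operator (hence Fuchsian with algebraic singularities and rational exponents), followed by singularity analysis to read off the Nilsson-type asymptotics of the Taylor coefficients from the local monodromy data at the dominant singularities. So your proposal is correct and matches the approach of the cited source; there is nothing to compare against in the present paper itself.

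One small clarification on your Gevrey-1 step: the statement that the local Frobenius expansions of $G$ at a regular singular point are themselves $G$-functions (after stripping the $(z-z_0)^{\a}(\log)^{\b}$ prefactors) is exactly Andr\'e's theorem, and you are right to flag it as the non-elementary input. Once that is granted, the Gevrey-1 bound for the $1/n$-series follows because the asymptotic expansion of $\Gamma(n-\a)/\Gamma(n)$ in powers of $1/n$ is Gevrey-1 and the convolution of a Gevrey-1 series with an exponentially bounded one remains Gevrey-1.
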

The above existence theorem has a valuable, effective corollary.

\begin{corollary}
\lbl{cor.Gtaylor}
If $G(z)=\sum_{n=0}^\infty a_n z^n$ is a $G$-function, and suppose that we
are given either (a) a linear differential equation for $G(z)$ with
coefficients in $L[x]$ for some number field $L$, or (b) a linear
recursion for $(a_n)$ with coefficients in $L[n]$, then one can effectively 
compute $\l,\a,\b$ and $h_{\l,\a,\b}(x) \in K[[x]]$ for a number field $K$
such that the asymptotic expansion \eqref{eq.nilsson} holds, for some
{\em unknown} Stokes constants $S_{\l,a\,\b}$.
\end{corollary}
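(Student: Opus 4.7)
The plan is to reduce case (b) to case (a) by turning a given $L[n]$-linear recursion for $(a_n)$ into a $L[z]$-linear ODE for $G(z)$ via the standard holonomic dictionary in which $n$ corresponds to $z\,d/dz$ and the shift $n\mapsto n+1$ corresponds to division by $z$, with the finitely many initial conditions absorbed into an inhomogeneous polynomial that a further differential operator can clear. So from now on I assume I am given $\mathcal{L}\in L[z]\langle d/dz\rangle$ with $\mathcal{L}G=0$. Invoking Andr\'e's structural theorem on $G$-operators, I may then factor $\mathcal{L}$ and restrict to a factor annihilating $G$ whose singularities are all regular and whose indicial polynomial at each singularity has rational roots.

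For each singular point $z_0$ of $\mathcal{L}$ (the roots of its leading polynomial coefficient together with $\infty$, all algebraic and effectively listed), I run the Frobenius algorithm at $z_0$. This produces a basis of formal solutions of the shape $(z-z_0)^{\alpha}(\log(z-z_0))^{\beta}h(z-z_0)$, where $\alpha$ ranges over the rational roots of the indicial equation, $\beta$ is bounded by the sizes of the Jordan blocks coming from indicial roots that differ by an integer, and the coefficients of $h$ are determined recursively from $\mathcal{L}$ by a triangular linear system. Each coefficient lies in a fixed number field $K$ obtained by adjoining the indicial roots and the coefficients of $\mathcal{L}$, and each series is Gevrey-$1$ because $\mathcal{L}$ has only regular singularities.

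Next I restrict to the dominant singularities, namely the $z_0$ of minimum modulus at which $G$ actually fails to extend analytically, and apply singularity analysis \`a la Flajolet--Odlyzko (or the classical Darboux method) to convert each local Frobenius basis element into its contribution to $a_n=[z^n]G(z)$. The transfer dictionary sends $(z-z_0)^{\alpha}(\log(z-z_0))^{\beta}h(z-z_0)$ to a term of the form $\lambda^{n}n^{-\alpha-1}(\log n)^{\beta}\tilde h(1/n)$ times an effectively computable gamma factor, with $\lambda=1/z_0$; after relabeling this is exactly the shape prescribed by Definition \ref{def.nilsson}. The growth rates $\lambda$, exponents $\alpha$, nilpotency exponents $\beta$, and power series $h_{\lambda,\alpha,\beta}(x)\in K[[x]]$ are thereby all obtained explicitly from the Frobenius data.

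The single piece of \eqref{eq.nilsson} that cannot be read off from $\mathcal{L}$ alone is the collection of connection coefficients expressing $G$ in each local Frobenius basis at a dominant singularity; these are precisely the Stokes constants $S_{\lambda,\alpha,\beta}$, which are genuine transcendental periods in general and must remain unknown, as allowed by the statement. The main obstacle in this recipe is the resonant Frobenius analysis at singularities where indicial roots differ by an integer, since that is exactly what forces the logarithmic factors and the nontrivial nilpotency exponents $\beta$ to appear, and one must carefully track the Jordan block structure there in order to extract both $\beta$ and the corresponding coefficient series correctly.
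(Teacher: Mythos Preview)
The paper does not give a proof of this corollary at all: it is stated immediately after Theorem~\ref{thm.Gtaylor} as a direct consequence, followed only by the informal gloss ``In other words, a linear recursion for $(a_n)$ or a linear differential equation for $G(z)$ allows us to compute the asymptotic expansion \eqref{eq.nilsson} to all orders in $1/n$, up to a finite set of unknown Stokes constants,'' together with pointers to \cite{FS,WZ2} and \cite[Sec.~10]{GV} for worked examples. Your write-up is therefore not competing with a proof in the paper but rather unpacking what those references contain, and in that respect it is essentially the standard route: pass from recursion to ODE, invoke the Andr\'e--Chudnovsky--Katz theorem to get a Fuchsian $G$-operator with rational exponents, run Frobenius at the singular points, and transfer via singularity analysis. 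That is exactly the machinery underlying \cite{Ga3} and the cited illustrations.

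One point you should tighten: you say you ``restrict to the dominant singularities, namely the $z_0$ of minimum modulus at which $G$ actually fails to extend analytically.'' Deciding at which singular points of $\mathcal L$ the particular solution $G$ is genuinely singular is itself a connection-problem question and is not effective from $\mathcal L$ alone; it is part of the very Stokes-constant data you correctly flag as unknown at the end. The clean formulation is to list \emph{all} singular points of $\mathcal L$ of each fixed modulus, compute the Frobenius data there, and declare that the corresponding $S_{\lambda,\alpha,\beta}$ may vanish. With that adjustment your argument matches the intended content of the corollary.
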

In other words, a linear recursion for $(a_n)$ or a linear differential
equation for $G(z)$ allows us to compute the asymptotic expansion
\eqref{eq.nilsson} to all orders in $1/n$, up to a finite set of 
unknown Stokes constants. For explicit illustrations of the above corollary,
see \cite{FS,WZ2} and also \cite[Sec.10]{GV} where the authors discuss
in detail effective asymptotic expansions of evaluations of classical
spin networks.

\subsection{Statement of our results}
\lbl{sub.results}

Let $f^{(r)}(q)=d^r/dq^r f(q)$ denote the $r$-th derivative of a Laurent
polynomial $f(q) \in \BZ[q^{\pm 1}]$.

\begin{theorem}
\lbl{thm.Nilsson}
For every quantum spin network $(\Ga,\ga)$, every complex root of unity
$\zeta$ and every natural number $r \geq 0$,
the sequence $\lla \Ga,n \ga \rra^{(r)}(\zeta)$ is of Nilsson type.
\end{theorem}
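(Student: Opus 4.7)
The plan is to chain together the two new Wilf--Zeilberger ingredients of the paper with the $G$-function machinery of Theorem \ref{thm.Gtaylor}. Throughout, fix $(\Ga,\ga)$, a root of unity $\zeta$, and $r \geq 0$, and write $f_n(q) := \lla \Ga, n\ga \rra(q) \in \BZ[q^{\pm 1}]$, which is well defined by Proposition \ref{prop.1}. The first step is to observe that $(f_n(q))_{n\in\BN}$ is a $q$-holonomic sequence in $n$. This should follow by expressing $f_n(q)$ as a proper $q$-hypergeometric multisum coming from the state-sum formula for quantum spin network evaluations (a weighted sum of products of quantum $6j$-symbols, quantum dimensions of irreducible $U_q(\fsl_2)$-modules, and Kauffman bracket contributions from crossings), whose summation polytope and exponents depend linearly on $n$; closure of $q$-holonomic sequences under such summation then yields the claim.

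Applying Theorem \ref{thm.der} iteratively $r$ times shows that $f_n^{(r)}(q)$ is still $q$-holonomic in $n$. Then, by Theorem \ref{thm.qholo}, the specialization at the fixed root of unity gives that the complex sequence
\[
a_n \;:=\; \lla \Ga, n\ga \rra^{(r)}(\zeta) \;=\; f_n^{(r)}(\zeta)
\]
is holonomic, i.e., it satisfies a linear recursion in $n$ whose coefficients are polynomials in $n$ over the cyclotomic number field $K = \BQ(\zeta)$.

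The second step is to promote $G(z) := \sum_{n \geq 0} a_n z^n$ to a $G$-function in the sense of Definition \ref{def.Gfunction}. Since $f_n \in \BZ[q^{\pm 1}]$ and $\zeta$ is an algebraic integer, each $a_n$ lies in the ring of integers $\BZ[\zeta]$, which takes care of conditions (a) and (c). Condition (d) is immediate: a linear $n$-recursion for $a_n$ with polynomial coefficients translates directly into a linear differential equation for $G(z)$ with polynomial coefficients. The Galois condition (b) requires each Galois conjugate of $a_n$ to be bounded by $C^n$; any such conjugate has the form $f_n^{(r)}(\zeta')$ for another root of unity $\zeta'$ in $K$, so it suffices to show that the $q$-degrees and the absolute values of the coefficients of $f_n(q)$ grow at most exponentially in $n$. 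This in turn follows from the same state-sum formula: $\Ga$ contributes a bounded number of summation variables, each ranging over a set of size $O(n)$, and each summand is a product of a bounded number of quantum $6j$-symbols and quantum integers whose $q$-degrees and integer coefficients grow at most polynomially in $n$.

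With all four conditions verified, Theorem \ref{thm.Gtaylor} applied to $G(z)$ immediately yields that $(a_n)$ is of Nilsson type, completing the proof. The main obstacle is the first step: extracting an explicit proper $q$-hypergeometric multisum representation of $\lla \Ga, n\ga \rra(q)$ for knotted ribbon graphs of arbitrary valency, including the local projectors at vertices introduced in Section \ref{sec.eval}, and from which the uniform polynomial bounds on $q$-degrees and integer coefficients used in verifying (b) can be read off. Once this input is in place, the remainder of the argument is formal, being a cascade of Theorems \ref{thm.der}, \ref{thm.qholo}, and \ref{thm.Gtaylor}.
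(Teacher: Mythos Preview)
Your proposal is correct and follows essentially the same route as the paper: the paper deduces Theorem~\ref{thm.Nilsson} from Theorem~\ref{thm.Gtaylor} applied to the generating series $F_{\Ga,\ga,\zeta,r}$ of Theorem~\ref{thm.Gfun}, and the proof of Theorem~\ref{thm.Gfun} in Section~\ref{sub.thm1} verifies conditions (a)--(d) exactly as you outline, via Proposition~\ref{prop.1}, Theorems~\ref{thm.der} and~\ref{thm.qholo}, and the shadow state-sum representation (together with Lemma~\ref{lem.trivalent} to reduce to the trivalent case). The one place your sketch is thin is the exponential bound (b): the summand in the shadow formula is a ratio of products of quantum $6j$-symbols by products of quantum factorials, and the paper isolates this estimate as a separate result (Theorem~\ref{thm.expbound}), using that quantum multinomials have $\ell^1$-norm equal to the classical multinomial and that coefficients of $1/(q)_n$ are bounded by partition numbers; your claim that ``integer coefficients grow at most polynomially in $n$'' is not quite right, but the intended exponential bound does hold.
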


Theorem \ref{thm.Nilsson} follows from Theorem \ref{thm.Gtaylor} and the
following theorem.

\begin{theorem}
\lbl{thm.Gfun}
For every quantum spin network $(\Ga,\ga)$, every complex root
of unity $\zeta$ and every natural number $r$, the generating function
\begin{equation}
\lbl{eq.FGa}
F_{\Ga,\ga,\zeta,r}(z)=\sum_{n=0}^\infty \lla \Ga, n\ga \rra^{(r)}(\zeta) 
z^n
\end{equation} 
is a $G$-function.
\end{theorem}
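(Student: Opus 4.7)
The plan is to reduce Theorem \ref{thm.Gfun} to a combination of the two $q$-holonomic results advertised in the introduction (Theorems \ref{thm.der} and \ref{thm.qholo}) together with standard exponential bounds coming from the state sum formula for $\la\Ga,\ga\ra(q)$. First I would establish that the sequence $(\lla\Ga,n\ga\rra(q))_n$ is $q$-holonomic in $n$. Using the shadow state sum (Turaev-Viro, or equivalently the Kauffman bracket with Jones-Wenzl projectors), $\la\Ga,n\ga\ra(q)$ can be written as a finite multisum of a $q$-proper hypergeometric summand (a product of quantum integers, theta coefficients and quantum $6j$-symbols) over an integer polytope whose facets are linear in $n$. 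The $q$-analog of Wilf-Zeilberger's fundamental theorem then implies that such multisums are $q$-holonomic in $n$, and since $\lla\Ga,n\ga\rra(q)$ differs from $\la\Ga,n\ga\ra(q)$ only by the monomial $q^{-Q(n\ga)/4}$ coming from Proposition \ref{prop.1} (with $Q$ a fixed integer-valued quadratic form in $n$), $q$-holonomicity is preserved.

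Next, apply Theorem \ref{thm.der} exactly $r$ times to deduce that $(\lla\Ga,n\ga\rra^{(r)}(q))_n$ is $q$-holonomic, and then Theorem \ref{thm.qholo} at $q=\zeta$ to conclude that $a_n := \lla\Ga,n\ga\rra^{(r)}(\zeta)$ is a holonomic sequence in $n$ with values in $\BZ[\zeta]$. Consequently $F_{\Ga,\ga,\zeta,r}(z)=\sum_n a_n z^n$ is a holonomic power series, which verifies condition (d) of Definition \ref{def.Gfunction}. Condition (a) is immediate since $a_n \in \BZ[\zeta]$ is algebraic, and condition (c) is trivial because $\BZ[\zeta]$ consists of algebraic integers, so a constant denominator works.

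For condition (b), any Galois conjugate $\sigma(a_n)$ takes the form $\lla\Ga,n\ga\rra^{(r)}(\zeta')$ for some root of unity $\zeta'$ conjugate to $\zeta$. From the state sum of step one, the sum has at most $C_1 n^D$ terms, and each summand, evaluated at any root of unity of bounded order $\mathrm{ord}(\zeta)$, is bounded in absolute value by $C_2^n$, since quantum integers, theta coefficients and quantum $6j$-symbols at a root of unity of bounded order are themselves uniformly bounded. Differentiation introduces only a polynomial-in-$n$ prefactor because the $q$-degree of $\lla\Ga,n\ga\rra(q)$ grows polynomially in $n$. Thus $|\sigma(a_n)|\le C^n$ uniformly over Galois conjugates, completing the verification.

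The main obstacle lies in step one: establishing $q$-holonomicity of the raw evaluation for \emph{arbitrary} valency knotted ribbon graphs equipped with local projectors, rather than just trivalent networks. One must check that the Jones-Wenzl resolution of each higher-valency vertex produces a genuine $q$-proper hypergeometric summand with summation region a polytope whose facets depend linearly on $n$, and that framing corrections do not destroy this structure. Once this structural input is secured, the remainder is an essentially mechanical invocation of Theorems \ref{thm.der} and \ref{thm.qholo} together with the elementary exponential bounds above.
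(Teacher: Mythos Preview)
Your overall strategy matches the paper's: establish $q$-holonomicity of $\lla\Ga,n\ga\rra(q)$ via the shadow formula and WZ theory, apply Theorems \ref{thm.der} and \ref{thm.qholo}, then verify the four conditions defining a $G$-function. Two small corrections: the form $Q$ in Proposition \ref{prop.1} is $\BZ/4\BZ$-valued, so $Q(n\ga)$ is \emph{periodic} in $n$ rather than a genuine integer-valued quadratic in $n$; and the ``main obstacle'' you flag (arbitrary valency) is dispatched in one stroke by Lemma \ref{lem.trivalent}, which reduces any quantum spin network to a trivalent one with the same sequence of evaluations.

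The genuine gap is in your verification of condition (b). You assert that each summand of the shadow state sum, evaluated at a root of unity, is bounded by $C_2^n$. But the shadow formula \eqref{eq.shadow} has $\Theta$-symbols in the \emph{denominator}, and these (being essentially products of quantum factorials) vanish at roots of unity whenever the labels exceed the order of $\zeta$. Individual summands are therefore not even defined at $q=\zeta$; only the total sum is a Laurent polynomial. The paper's remedy (Theorem \ref{thm.expbound}) is to bound the coefficients of $\lla\Ga,N\ga\rra(q)$ as a Laurent polynomial \emph{before} specializing: the $q$-degree is $O(N^2)$, and the coefficient of $q^m$ in each $1/D(k)$ is controlled by the partition number $p_m \leq e^{C\sqrt{m}} = e^{O(N)}$, whence the $\ell^1$-norm of the coefficients is exponentially bounded in $N$. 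Only then does one evaluate at $\zeta$ (and at its Galois conjugates). Your differentiation remark is correct once this coefficient bound is in hand.
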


Theorem \ref{thm.Gfun} follows from Theorems \ref{thm.der},
\ref{thm.qholo} and \ref{thm.expbound} below, which involve structural 
properties of the
classes of holonomic and $q$-holonomic sequences and may be of independent 
interest. To state them, recall that a sequence $(f_n)$ of complex
numbers is {\em holonomic} if it satisfies a linear
recursion of the form
\begin{equation}
c_d(n) f_{n+d} + \dots + c_0(n) f_n=0
\end{equation}
for all $n$ where $c_j(n) \in K[n]$ are polynomials in $n$ 
with coefficients in a number field $K$ for $j=0,\dots,d$
with $c_d \neq 0$. Likewise, a sequence $(f_n(q))$ of rational functions of
$q$ is $q$-{\em holonomic} if it satisfies a linear
recursion of the form
\begin{equation}
c_d(q^n,q) f_{n+d}(q) + \dots + c_0(q^n,q) f_n(q)=0
\end{equation}
for all $n$ where $c_j(u,v) \in K[u,v]$ are polynomials in
two variables $u$ and $v$ for $j=0,\dots,d$ with $c_d \neq 0$. 
Holonomic and $q$-holonomic sequences were studied in detail by Zeilberger; 
see \cite{Z,WZ1}.

\begin{theorem}
\lbl{thm.der}
The derivative with respect to $q$ of a $q$-holonomic sequence is 
$q$-holonomic.
\end{theorem}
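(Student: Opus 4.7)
The plan is to differentiate the $q$-holonomic recursion $Pf = 0$ satisfied by $(f_n(q))$ and then recognise the resulting relation as producing a left-Ore annihilator for $(f_n'(q))$ via closure properties of the class of $q$-holonomic sequences. Write $P = \sum_{j=0}^d c_j(Q,q) N^j$ with $c_j(u,v) \in K[u,v]$, where $N$ denotes the shift $n\mapsto n+1$ and $Q$ denotes multiplication by $q^n$; let $D = d/dq$ act term-wise on sequences and let $E$ denote multiplication by $n$.

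A direct Leibniz computation gives the Ore commutation $D Q^a = Q^a D + a E q^{-1} Q^a$, which assembles into the operator identity
$$DP \;=\; PD \;+\; E\tilde P \;+\; \hat P, \qquad \tilde P := \sum_j q^{-1} Q\, \partial_u c_j(Q,q)\, N^j, \quad \hat P := \sum_j \partial_v c_j(Q,q)\, N^j,$$
with $\tilde P, \hat P \in K[Q, q^{\pm 1}][N]$. Applying the identity to $f$ and using $Pf = 0$ yields
$$P f' \;=\; -\,n\,(\tilde P f) \;-\; \hat P f.$$

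The key observation is that the right-hand side is itself a $q$-holonomic sequence. Indeed, $\tilde P f$ and $\hat P f$ are Ore-operator applications of $f$ and are therefore $q$-holonomic; moreover, multiplication by $n$ preserves $q$-holonomicity, because for any sequence $h$ annihilated by $P_h = \sum_j c^h_j(Q,q) N^j$ one computes $P_h \cdot (n h_n) = \sum_j j\, c^h_j(Q,q) h_{n+j}$, which is again an Ore-operator application of $h$ and hence $q$-holonomic. Composing $P_h$ on the left by any annihilator of that reduced sequence produces an annihilator of $(n h_n)$. Applied with $h = \tilde P f$ this shows $n(\tilde P f)$ is $q$-holonomic, and sums of $q$-holonomic sequences are $q$-holonomic via the least common left multiple of their annihilators in the Ore domain $K(Q, q)[N]$.

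Hence the right-hand side is annihilated by some non-zero $R \in K(Q,q)[N]$, and applying $R$ on the left of $Pf' = -n\tilde P f - \hat P f$ gives $(RP) f' = 0$. Since $K(Q,q)[N]$ is an Ore domain we have $RP \neq 0$, and after left-clearing denominators by a suitable element of $K[Q,q]$ we obtain a non-zero operator in $K[Q,q][N]$ annihilating $(f_n')$: the desired $q$-holonomic recursion for the derivative sequence. The subtle step is the closure under multiplication by $n$, needed to absorb the factor $n q^{n-1}$ produced by $\partial_q q^n$ which would otherwise not lie in the coefficient ring $K[Q,q]$; apart from that, the proof amounts to the standard Wilf--Zeilberger calculus applied to the $q$-derivative of the defining recursion.
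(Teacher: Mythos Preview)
Your proof is correct and follows essentially the same approach as the paper's: differentiate the defining recursion with respect to $q$, recognise the resulting extra terms as $q$-holonomic via closure properties, and then compose on the left by their annihilator to obtain a recursion for $(f_n')$. The one minor variation is in how the factor of $n$ arising from $\partial_q q^n = n q^{n-1}$ is absorbed: the paper proves a separate lemma that $(n q^{cn})$ is $q$-holonomic and then appeals to closure under products, whereas you give a direct self-contained argument that multiplication by $n$ preserves $q$-holonomicity by computing $P_h(n h_n) = \sum_j j\, c_j^h(Q,q)\, h_{n+j}$ and iterating; the two routes are equivalent and yours is arguably slightly cleaner.
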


\begin{theorem}
\lbl{thm.qholo}
For every $q$-holonomic sequence $f_n(q) \in \BZ[q^{\pm 1}]$ and every
complex root of unity $\zeta$, 
the evaluation $f_n(\zeta)$ is a holonomic
sequence. 
\end{theorem}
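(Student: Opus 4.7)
The plan is to exploit the fact that specializing a $q$-holonomic recursion at $q=\zeta$ produces a linear recursion for $f_n(\zeta)$ whose coefficients, though no longer polynomial in $n$, are $\ell$-periodic in $n$, where $\ell$ denotes the order of $\zeta$. A Floquet/transfer-matrix argument then converts such a periodic-coefficient recursion into a constant-coefficient (and hence polynomial-coefficient) recursion for $f_n(\zeta)$, which is exactly the notion of holonomy.

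Concretely, I start with a $q$-holonomic recursion $\sum_{j=0}^d c_j(q^n,q) f_{n+j}(q) = 0$ and specialize $q = \zeta$ to get
\[
\sum_{j=0}^d \tilde c_j(n) f_{n+j}(\zeta) = 0, \qquad \tilde c_j(n) := c_j(\zeta^n,\zeta),
\]
with $\tilde c_j(n+\ell) = \tilde c_j(n)$. In the generic case that $\tilde c_d(n)$ is nowhere zero, I rewrite this in companion-matrix form $V_{n+1} = M_n V_n$, where $V_n := (f_n(\zeta),\ldots,f_{n+d-1}(\zeta))^T$ and $M_n$ is an $\ell$-periodic $d\times d$ matrix. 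For each $r \in \{0,\ldots,\ell-1\}$, the one-period monodromy $P_r := M_{r+\ell-1}\cdots M_r$ satisfies $V_{r+(m+1)\ell} = P_r V_{r+m\ell}$; Cayley--Hamilton applied to $P_r$ then supplies a constant-coefficient linear recursion of order $d$ on the arithmetic subsequence $\bigl(f_{r+m\ell}(\zeta)\bigr)_m$. Taking the least common multiple $Q(x)$ of these $\ell$ characteristic polynomials yields the single relation $Q(N^\ell) f_n(\zeta) = 0$ valid for every $n$, where $N$ denotes the shift operator; this is a constant-coefficient (hence polynomial-coefficient) recursion in $n$, so $(f_n(\zeta))_n$ is holonomic.

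The principal obstacle is the degenerate case in which $\tilde c_d(n) = c_d(\zeta^n,\zeta)$ vanishes on some residue class modulo $\ell$, making the companion matrix $M_n$ ill-defined on that class. To remove these apparent singularities, I would work in the Ore algebra generated by $N$ and multiplication by $q^n$, where the annihilator of $(f_n(q))$ is a left ideal containing many operators besides the one chosen. By left-multiplying the annihilator by an appropriate operator and combining with shifts of itself, one can enlarge the order of the recursion in a controlled way and replace the leading coefficient with one that does not vanish on the finite set $\{(\zeta^s,\zeta): s=0,\ldots,\ell-1\}$. This is the $q$-difference analogue of the standard desingularization of apparent singularities in $D$-modules; since only finitely many residue classes need to be fixed, finitely many such modifications suffice. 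Once a $q$-holonomic recursion with non-vanishing leading coefficient at $\zeta$ is produced, the Floquet argument above applies verbatim and yields the holonomicity of $(f_n(\zeta))_n$.
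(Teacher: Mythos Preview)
Your Floquet/monodromy argument for the generic case is clean and correct, and in fact more elementary than the paper's route through the cyclic vector theorem. The genuine gap is in the degenerate case: the proposed desingularization is not always possible, and the example in the paper's Remark~\ref{rem.qholo3} already kills it. Take
\[
f_n(q)=\prod_{k=1}^n\frac{1-q^k}{1-q}\in\BZ[q],
\qquad (q-1)f_{n+1}(q)-(q^{n+1}-1)f_n(q)=0,
\]
and $\zeta=1$. If there existed \emph{any} $q$-holonomic relation $\sum_{j=0}^D a_j(q^n,q)f_{n+j}(q)=0$ with $a_D(1,1)\neq 0$, then specializing at $q=1$ would give a nontrivial \emph{constant}-coefficient recursion for $f_n(1)=n!$, which is impossible since $n!$ grows super-exponentially. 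Hence no left multiple in the Ore algebra can produce a leading coefficient that is nonzero at $(\zeta^n,\zeta)=(1,1)$; the annihilating ideal is contained in the maximal ideal cut out by $q=1$ on the leading term. Your appeal to ``desingularization of apparent singularities'' therefore fails here: the singularity is not apparent.

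What is actually needed is exactly what your generic-case argument discards: the \emph{order of vanishing} of the coefficients at $q=\zeta$. Writing $c_j(q^{Nn+i},q)$ as a power series in $(q-\zeta)$, the coefficients lie in $\BQ(\zeta)[n]$ because $\frac{d}{dq}q^{Nn+i}$ brings down a factor linear in $n$. If all $c_j$ vanish to order $s$ at $q=\zeta$, the coefficient of $(q-\zeta)^s$ gives a genuine linear recursion for $f_n(\zeta)$ with \emph{polynomial} (not merely periodic) coefficients in $n$; in the $n!$ example this recovers $f_{n+1}(1)-(n+1)f_n(1)=0$. The paper's proof is built around this differentiation trick (combined with a graph/connectivity bookkeeping for the residue classes $i\bmod N$ when $\zeta\neq 1$). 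Your monodromy machinery can still be used once the polynomial-in-$n$ recursion on each residue class is in hand, but it cannot replace the differentiation step.
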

Let us make some remarks.

\begin{remark}
\lbl{rem.qholo1}
Theorem \ref{thm.qholo} fails when $\zeta$ is
not a complex root of unity. For example, $f_n(q)=q^{n^2}$ is $q$-holonomic
and satisfies the recursion $f_{n+1}(q)-q^{2n+1} f_n(q)=0$. On the other hand,
$f_n(\omega)$ is holonomic only when $\omega$ is a complex root of unity. 
Theorem \ref{thm.qholo} is another manifestation
of the importance of roots of unity in Quantum Topology. See also Section 
\ref{sec.proofs}.
\end{remark}
 
\begin{remark}
\lbl{rem.qholo2}
Theorems \ref{thm.der} and \ref{thm.qholo} presumably hold 
for for multi-variable $q$-holonomic sequences 
$f_{n_1,\dots,n_r}(q) \in \BZ[q^{\pm 1}]$. Multi-variable holonomic and $q$-holonomic
sequences were introduced and studied in \cite{Z,WZ1}. 
In the present paper, we will not use them. However, the reader should keep in
mind that for every quantum spin network $(\Ga,\ga)$ the multi-variable
sequence $\ga\mapsto \la \Ga,\ga\ra(q)$ is $q$-holonomic. This follows
from Section \ref{sub.recoupling} where it is shown that 
$\la \Ga,\ga\ra(q)$ is a multi-sum of a $q$-proper hypergeometric term. 
By the fundamental theorem of WZ-theory (see \cite{WZ1}),
$\la \Ga,\ga\ra(q)$ is $q$-holonomic in all $\ga$-variables. 
\end{remark}

\begin{remark}
\lbl{rem.qholo3}
The $q$-holonomic sequence $f_n(\zeta)$ in Theorem \ref{thm.qholo}
need not be exponentially bounded. For example,
$$
f_n(q)=\prod_{k=1}^n\frac{1-q^k}{1-q} \in \BZ[q]
$$
satisfies the linear recursion
$$
(q-1) f_{n+1}(q)-(q^{n+1}-1)f_n(q)=0.
$$
Thus, $f_n(q)$ is $q$-holonomic. Its evaulation at $\zeta=1$ is given by
$f_n(1)=n!$ which is holonomic, since it satisfies the linear recursion
$$
f_{n+1}(1)-(n+1)f_n(1)=0
$$
However, $n!$ is not exponentially bounded. 
\end{remark}
Despite the above remark, the evaluations of quantum spin networks at
a fixed root of unity is exponentially bounded.

\begin{theorem}
\lbl{thm.expbound}
For every quantum spin network $(\Ga,\ga)$, every complex root
of unity $\zeta$ and every natural number $r$, there exists $C>0$ (which
depends on $\Ga,\ga$ and $r$) such that
\begin{equation}
|\lla \Ga, n\ga \rra^{(r)}(\zeta)| \leq C^n 
\end{equation} 
for all $n \in \BN$.
\end{theorem}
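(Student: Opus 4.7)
The plan is to bound the derivative by controlling the Laurent polynomial $\lla \Ga, n\ga \rra(q) \in \BZ[q^{\pm 1}]$ from Proposition \ref{prop.1}. Writing $\lla \Ga, n\ga \rra(q) = \sum_k a_{n,k}\, q^k$ with $a_{n,k} \in \BZ$, and letting $D(n) = \max\{|k| : a_{n,k}\neq 0\}$ be the total $q$-degree, the identity $(q^k)^{(r)} = k(k-1)\cdots(k-r+1)\,q^{k-r}$ and the triangle inequality give
\[
|\lla \Ga, n\ga\rra^{(r)}(\zeta)| \leq \sum_k |a_{n,k}|\cdot|k(k-1)\cdots(k-r+1)| \leq (D(n)+r)^r \sum_k |a_{n,k}|.
\]
Hence it suffices to prove (i) $D(n)$ grows polynomially in $n$, and (ii) the $\ell^1$-norm $\sum_k |a_{n,k}|$ grows at most exponentially in $n$.

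Step (i) is the easier half: a direct analysis of the recoupling state-sum recalled in Section \ref{sub.recoupling} shows that the $q$-degree of $\lla \Ga, n\ga \rra(q)$ is at most quadratic in $n$, consistent with the quadratic form $Q(\ga)$ appearing in Proposition \ref{prop.1}. Step (ii) is the heart of the matter. I would use the multi-sum representation
\[
\lla \Ga, n\ga \rra(q) = \sum_{k \in S_n} T_n(k, q),
\]
where the cardinality $|S_n|$ is polynomial in $n$ and each $T_n(k,q)$ is a $q$-proper hypergeometric term built from $q$-binomial coefficients and monomials in $q$. Since $\qbinom{a}{b}_q$ has non-negative integer coefficients as a polynomial in $q$, its $\ell^1$-norm equals $\binom{a}{b}\leq 2^a$; as the upper arguments appearing in $T_n(k,q)$ are bounded by a linear function of $n$, each summand has $\ell^1$-norm bounded by $C_3^n$, and the triangle inequality yields $\sum_k |a_{n,k}| \leq |S_n|\cdot C_3^n \leq C_2^n$.

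The main obstacle is that the hypergeometric summands $T_n(k,q)$ arising from the recoupling state-sum are a priori rational functions of $q$ whose denominators may vanish at $\zeta$; they need not individually be Laurent polynomials, so they cannot be compared termwise to $\sum_k a_{n,k}q^k$. To carry out the $\ell^1$ bound above rigorously, I would first rearrange the multi-sum so that each summand is manifestly in $\BZ[q^{\pm 1}]$, for instance by invoking the integral recursion for the Jones--Wenzl projector and performing partial cancellations between summands sharing a common denominator factor $1-q^j$. Once each rearranged summand lies in $\BZ[q^{\pm 1}]$ with exponentially bounded $\ell^1$-norm, combining (i) and (ii) yields $|\lla \Ga, n\ga \rra^{(r)}(\zeta)| \leq (C_1 n^2 + r)^r C_2^n \leq C^n$ for suitable $C$ depending on $\Ga$, $\ga$, $\zeta$, and $r$, as required.
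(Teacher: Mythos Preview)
Your overall framework coincides with the paper's: reduce the bound on $|\lla\Ga,n\ga\rra^{(r)}(\zeta)|$ to (i) a quadratic bound on the $q$-degree $D(n)$ and (ii) an exponential bound on the $\ell^1$-norm $\sum_k|a_{n,k}|$ of the Laurent polynomial $\lla\Ga,n\ga\rra(q)$. Your reduction of the case $r>0$ to these two ingredients via $|(q^k)^{(r)}|\le (D(n)+r)^r$ on $|q|=1$ is clean and in fact more explicit than the paper, which writes out only the case $r=0$. You also correctly use, exactly as the paper does in Lemma~\ref{lem.expbound}, that a $q$-multinomial has nonnegative integer coefficients whose sum is the classical multinomial, giving an exponential $\ell^1$-bound for each quantum $6j$-symbol.

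The gap is precisely where you locate it, but your proposed fix is not the one that works. You suggest rearranging the state-sum so that each individual summand lies in $\BZ[q^{\pm1}]$ with exponentially bounded $\ell^1$-norm; this rearrangement is left unspecified, and it is not clear one exists. The paper does \emph{not} attempt any such rearrangement. Instead it writes each shadow-formula summand as $M(k)/D(k)$ with $M(k)$ a power of $q$ times a product of quantum $6j$-symbols (hence a Laurent polynomial with exponentially bounded $\ell^1$-norm by the argument you gave) and $D(k)$ a product of quantum factorials. It then expands $1/D(k)$ as a \emph{formal} power series in $q$ and invokes the Hardy--Ramanujan bound on partition numbers: the coefficient of $q^j$ in any $1/(q)_m$ is at most $p_j\le e^{\pi\sqrt{2j/3}}$. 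The crucial point is that one already knows $\lla\Ga,N\ga\rra(q)$ is a Laurent polynomial of $q$-degree $O(N^2)$, so only coefficients of $M(k)/D(k)$ with index $j=O(N^2)$ survive in the final answer, and for those $p_j\le e^{O(N)}$. Combined with the bound on $M(k)$ and the polynomially many lattice points $k\in NP$, this yields the exponential bound. In short, the missing idea is: do not try to make each summand a Laurent polynomial; use the a priori global degree bound to truncate the formal power series $1/D(k)$, and control the truncated coefficients via partition asymptotics.
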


\subsection{Acknowledgment}
The paper was conceived during a conference on 
{\em Interactions Between Hyperbolic Geometry, Quantum Topology and 
Number Theory} held in Columbia University, New York, in June 2009.
The authors wish to thank the organizers, A. Champanerkar, O. Dasbach, 
E. Kalfagianni, I. Kofman,  W. Neumann and N. Stoltzfus 
for their hospitality. In addition, we wish to thank F. Costantino,
G. Kuperberg and D. Zagier for enlightening conversations.

\section{Evaluation of quantum spin networks}
\lbl{sec.eval}

\subsection{Local projectors}
\lbl{sub.projectors}

We start by defining classical spin networks of arbitrary valency using
the notion of a local projector. A {\em classical spin network} will be a 
pair $(\Ga,\ga)$ of an abstract ribbon graph $\Ga$ (of arbitrary valency), 
together with a pair of functions $\ga=(\ga_E,\ga_V)$ where 
$\ga_E: \mathrm{Edges}(\Ga)
\longto\BN$ is an admissible coloring $\ga$ of its edges, and 
$\ga_E: \mathrm{Vert}(\Ga)\longto\{\mathrm{projectors}\}$ 
a choice of a local projector at each vertex of $\Ga$. 
A local projector is defined as follows. Given an admissible coloring 
$(c_1,\dots,c_d)$ of the $d$ edges around a vertex $v$ of a ribbon graph, 
place a collection of $c_1+\dots + c_d$
points on a disk. A {\em local projector} is a planar way to
connect these points with {\em disjoint arcs} on the disk and 
{\em no $U$-turns}, as in the following example:

\begin{figure}[htpb]
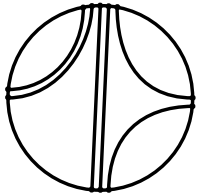

$$
\psdraw{Projector}{1in}
$$
\caption{An example of a projector for a degree $4$ vertex with edge 
coloring $(2,5,2,3)$.}\lbl{fig.Projector}
\end{figure}
Note that if $p$ is a local projector and $n$ is a natural number, then
there is there is a canonical choice of a local projector $n p$ by cabling
each arc of $p$ into $n$ arcs.
In \cite[p.118]{Ku}, Kuperberg uses the term {\em clasped web space} 
$W(c_1,\dots,c_d)$ which has a basis the local projectors defined above.
Local projectors are a pictorial way to encode dual canonical bases for
$\fsl_2$, as was shown in \cite{FK}. However, clasped web spaces
and dual canonical bases do {\em not} coincide for $\fsl_3$; see \cite{KK}.
Finally note that 
there are alternative methods of defining multi-valent vertices,
for example \cite{Yo},\cite{Ba}. However our definition using 
projectors works without change for Lie algebras of arbitrary rank.

\subsection{The standard evaluation of a quantum spin network}
\lbl{sub.standard}

Recall from the introduction that a quantum spin network is a pair 
$(\Ga,\ga)$ where $\Ga$ is a framed ribbon graph embedded in $S^3$ and 
$\ga = (\ga_E,\ga_V)$ 
as in the classical case. We allow $\Ga$ to have multiple edges and 
loops and it may be disconnected. Knots and links are quantum spin 
networks with zero
vertices. However we restrict ourselves to integral framings of $\Ga$.
In this section we define the evaluation of quantum spin 
networks in terms of the Kauffman bracket. 
Recall that the {\em quantum integer} $[n]$ and the balanced 
{\em quantum factorial}
$[n]!$ is defined by
\begin{equation}
\lbl{eq.qinteger}
[n]=\frac{A^{2n}-A^{-2n}}{A^2-A^{-2}}, \qquad [n]!=\prod_{k=1}^n [k]
\end{equation}
where
\begin{equation}
\lbl{eq.Ap}
A^4 = q.
\end{equation}
The {\em Kauffman bracket} evaluation is determined by the following rules:
\begin{eqnarray*}
\psdraw{Kauffman1}{0.3in} &=& A \psdraw{Kauffman2}{0.3in} + A^{-1} 
\psdraw{Kauffman3}{0.3in} \\ \vspace{1cm}
\psdraw{1j}{0.2in} \cup \mathrm{D} &=& -[2]\cdot \mathrm{D}
\end{eqnarray*}
To emphasize the similarity to the evaluation of classical spin networks we 
use the explicit expression for the Jones-Wenzl idempotent
from \cite[Chpt.3]{KL}. This is defined as the following formal sum of braids:
$$
A^{b(b-1)}\sum_{\sigma \in S_b}A^{-3 \ell(\sigma)}\b_{\sigma}
$$
Here $b$ is the label of the edge, and for any permutation $\sigma$ we 
denote by $\b_{\sigma}$ the unique negative 
(with respect to orientation downwards) permutation braid corresponding 
to $\sigma$.
By $\ell(\sigma)$ we mean the minimal length of $\sigma$ written as a product 
of transpositions.
Note that we leave out the quantum factorial $[b]!$ that is used in \cite{KL}.

\begin{definition}
\lbl{def.evalP}
\rm{(a)} We say a quantum spin network is {\em admissible} if for every 
vertex $v$ the projector $\ga_V(v)$
matches the labels of the edges given by $\ga_E$.
\newline
\rm{(b)}
The evaluation $\la \Ga,\ga\ra^P$ of a quantum spin network $(\Ga,\ga)$
is defined to be zero if it is not admissible. An admissible quantum spin 
network is 
evaluated by the
following algorithm. 
\begin{itemize}
\item
Use the ribbon structure to thicken the vertices into disks and the 
edges into untwisted bands. 
\item
Replace each vertex $v$ by the pattern of the projector $\ga_V(v)$ and 
replace each edge by the linear combination of braids as shown in Figure 
\ref{fig.evaluation}. 
\item
Finally 
the resulting linear combination of links is evaluated calculating the 
Kauffman bracket.
\end{itemize}
\end{definition}   

\begin{figure}[htpb]
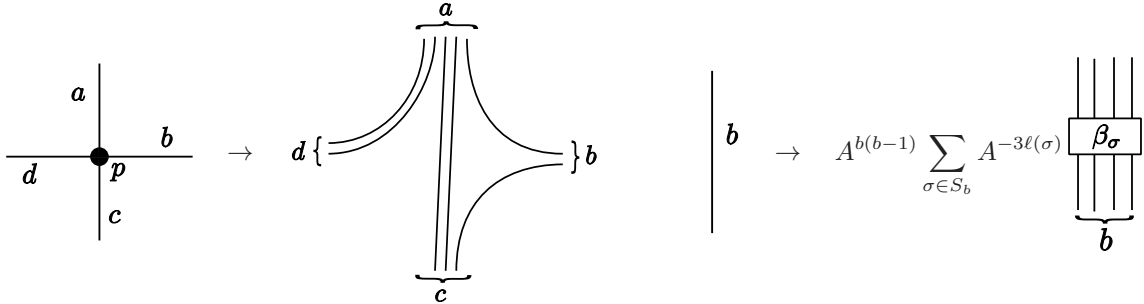

$$
\psdraw{evalnew4}{1in}\quad \to\quad \psdraw{evalnew3}{1.6in} \hspace{0.5in} 
\psdraw{evalnew2}{0.25in}\quad \to\quad A^{b(b-1)}\sum_{\sigma \in S_b} A^{-3\ell(\sigma)}
\psdraw{evalnew1}{0.4in}
$$
\caption{The rules for evaluating a quantum spin network. Replace 
vertices and edges 
to get a linear combination of links that is evaluated using the 
Kauffman bracket.}\lbl{fig.evaluation}
\end{figure}

In the case $A = -1$ this definition agrees with the Penrose evaluation 
defined in \cite{GV}.
For general $A$ the definition agrees with that given in \cite{MV,KL,CFS} 
except for the missing quantum factorial $[b]!$ in the denominator, one 
for every edge. The reason for leaving out this factor is that there is a 
better way to normalize (see Definition \ref{def.eval} below) which
results in Laurent polynomials in $q^{1/4}$ (as opposed to rational functions
in $q^{1/4}$) while the value of 
$\la \Ga ,n\ga \ra(q)$ at a fixed root of unity $q$ 
grows at most exponentially with respect to $n$, see Theorem \ref{thm.expbound}.

\begin{definition}
\lbl{def.eval}
The {\em standard evaluation} of a quantum spin network is defined by 
$$
 \la \Ga,\ga \ra = \frac{1}{[\mathcal{I}]!}\la \Ga,\ga \ra^P
$$
Here $[\mathcal{I}]! = \prod_v [\ga_V(v)]!$, where for a projector $[p]!$ 
means grouping 
all strands connecting the same two edges and forming the product of the 
quantum factorials of
these numbers. 
\end{definition}

Note that with this normalization subdividing an edge colored $a$ into two 
edges colored $a$ connected by a vertex whose third edge is colored $0$ 
does not change the value of the evaluation. This is the way in which the 
value of the unknot should be interpreted in order to obtain the correct 
value $(-1)^a[a+1]$ instead of its quantum factorial. 

We end this section by proving that any quantum spin network evaluation
actually reduces to a trivalent quantum spin network evaluation. Hence
all results previously known in the trivalent case extend to the general case.
 
\begin{lemma}
\lbl{lem.trivalent}
Let $(\Ga,\ga)$ be a quantum spin network. There exists a trivalent 
quantum spin network $(\Ga',\ga')$ such that for all $n\in \mathbb{N}$
$$
 \la \Ga,n\ga \ra =\la \Ga',n\ga'\ra 
$$
\end{lemma}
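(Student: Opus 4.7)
The plan is to reduce the valency of each vertex by one at a time until all vertices are trivalent. Consider a vertex $v$ of $\Ga$ of valency $d \geq 4$ with cyclically ordered edges $e_1, \dots, e_d$ colored $c_1, \dots, c_d$, and local projector $p$ described by the integers $m_{ij}$ counting arcs between $e_i$ and $e_j$ (so $m_{ii} = 0$ and $\sum_{j} m_{ij} = c_i$). By the planar, no-$U$-turn condition on $p$, the two cyclically adjacent edges $e_1, e_2$ are separated from $e_3, \dots, e_d$ by a chord of the disk around $v$. Setting $k = m_{12}$ and $c = c_1 + c_2 - 2k = \sum_{j \geq 3}(m_{1j} + m_{2j})$, this chord is crossed by exactly $c$ arcs of $p$.

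The splitting move introduces a new internal edge $f$ colored $c$ joining a new trivalent vertex $v'$ (incident to $e_1, e_2, f$, with projector consisting of $k$ arcs between $e_1, e_2$ and $c_1 - k$, $c_2 - k$ arcs from $e_1, e_2$ to $f$) and a new $(d-1)$-valent vertex $v''$ (incident to $f, e_3, \dots, e_d$, whose projector is obtained from $p$ by replacing the $c$ arcs crossing the chord with arcs ending on $f$, preserving planarity). This move is natural in the scaling parameter $n$, since the arc counts and the color of $f$ all scale by $n$; hence $\ga'$ can be defined once and for all so that $n\ga'$ corresponds to the move applied at $n\ga$. Iterating at every vertex of valency $\geq 4$ terminates in finitely many steps and produces the desired trivalent spin network $(\Ga', \ga')$.

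The core technical step is verifying that this local move preserves the normalized evaluation. Working with $\la \cdot \ra^P$ of Definition \ref{def.evalP}: inserting the edge $f$ places the formal sum $A^{c(c-1)}\sum_{\sigma \in S_c} A^{-3\ell(\sigma)}\b_\sigma$ between two blocks of straight-through arcs coming from the projectors at $v'$ and $v''$, and the composition contributes a local combinatorial factor expressible as a product of quantum factorials in the $m_{ij}$'s involved in the move. A direct computation then shows that the ratio $[\mathcal{I}']!/[\mathcal{I}]!$ of the two normalizations in Definition \ref{def.eval} is exactly equal to this factor, because the strand-grouping contributed by $p$ at $v$ gets replaced by the strand-groupings contributed by $p_{v'}$ and $p_{v''}$ at the two new vertices. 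The degenerate case $k = c_1 = c_2$ (so that $c = 0$ and $v'$ becomes a $0$-colored trivalent subdivision) is directly covered by the remark following Definition \ref{def.eval}. The main obstacle is the careful bookkeeping of these quantum factorials, ensuring that the local identity holds identically in $n$ rather than only up to an overall scalar.
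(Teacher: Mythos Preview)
Your approach has a genuine gap that in fact makes the proposed construction incorrect. The problem is the claim that inserting the Jones--Wenzl sum on the new edge $f$ contributes merely ``a local combinatorial factor expressible as a product of quantum factorials.'' The absorbing property of the Jones--Wenzl idempotent $P_c$, namely $P_c\circ(P_a\otimes\mathrm{id}_{c-a})=P_c$, is what allows an inserted box to cost only a quantum factorial, but it requires all $c$ strands being boxed to emerge from a \emph{single} existing idempotent. In your splitting the $c$ strands of $f$ come from two distinct boxes ($P_{c_1}$ on $e_1$ and $P_{c_2}$ on $e_2$) on the $v'$ side and from several boxes on the $v''$ side. A turnback in $P_c$ joining a strand headed to $e_1$ with one headed to $e_2$ is killed by neither $P_{c_1}$ nor $P_{c_2}$, so these cross terms survive. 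Concretely, take a $4$-valent vertex with all edge colors $1$ and projector $p$ pairing $e_1$ with $e_4$ and $e_2$ with $e_3$: your split gives $f$ of color $2$, and the inserted box $[2]\,\mathrm{id}+(\text{cup-cap})$ on $f$ returns $[2]$ times the original web \emph{plus} the other web (pairing $e_1$ with $e_2$ and $e_3$ with $e_4$), not a scalar multiple of $p$. Put differently, the clasped-web basis of $W(c_1,\dots,c_d)$ is not the fusion-tree basis, so a single clasped web at $v$ does not factor through one summand $V_c$ of $V_{c_1}\otimes V_{c_2}$; your $(\Gamma',\gamma')$ computes the fusion-tree vector rather than the original clasped-web vector.

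The paper avoids this by working half-edge by half-edge rather than splitting the vertex. At a half-edge $e$ whose strands are grouped by the projector into bundles of sizes $a_1,a_2,\dots$, it inserts a staircase of boxes on the first $a_1$ strands, the first $a_1+a_2$ strands, and so on. Each such box sits on strands issuing from the single idempotent already present on $e$, so the absorbing property applies cleanly and each insertion contributes exactly one quantum factorial. After doing this at every half-edge the picture can be read as a genuine trivalent network, and the accumulated factorials match the ratio of normalizations $[\mathcal{I}']!/[\mathcal{I}]!$ on the nose.
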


\begin{proof}
It is convenient to take a slightly different view of the evaluation 
algorithm
defined in \ref{def.evalP}. Instead of expanding out the linear combination 
at
every edge, we view the Jones-Wenzl idempotent as box with arcs coming out. 
After expanding the 
vertices
the whole network becomes a number of boxes connected by arcs. 
The idea is to add extraneous boxes and to reinterpret the result as the
evaluation of a trivalent quantum spin network.
The key property that makes this work is the fundamental fact that 
once one includes the factor $\frac{1}{[b]!}$ in a box with $b$ strands, 
it becomes
an idempotent \cite{MV}. Since we're not using this normalization we get that 
adding an extra box to $b$ parallel adjacent arcs coming out of a box
multiplies the evaluation by $[b]!$. This factor will cancel with the
the normalization factor due to the trivalent vertices that will be 
created in the process.

Let us first describe how to modify a single half-edge pointing into a 
multivalent vertex $v$ with projector $p$.
The arcs coming into $v$ from half edge $e$ are grouped into $a_j$ parallel 
arcs according to the projector $p$. To the box that already marks the 
beginning of $e$ we now 
add an extra stack of boxes as shown in the middle of Figure
\ref{fig.Trivalent}. This will multiply the Penrose evaluation of $\Ga$ by a 
factor 
$a_e = ([a_2]!...[a_n]!)([a_1+a_2...+a_{n-1}]!...[a_1+a_2]![a_1]!)$

\begin{figure}[htpb]
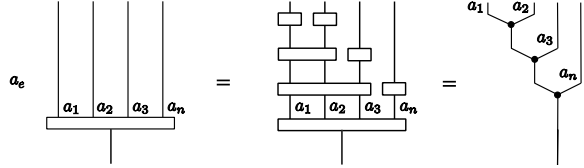

$$
\psdraw{Trivalent}{3in}
$$
\caption{Turning an edge into a trivalent network.}\lbl{fig.Trivalent}
\end{figure}

Doing this for every half edge gives just enough boxes to split into a 
trivalent quantum spin network $\Ga'$ as
shown in the above figure. However the smallest boxes at the tip of the 
tree meet another equal sized box
on the other side which are not there in the evaluation of $\Ga'$. 
Therefore we have to remove those boxes again, which 
reduces the Penrose evaluation by exactly a factor 
$[\mathcal{I}]! = [\mathcal{I}(\Ga)]!$. Therefore,
$$
\la\Ga,\ga\ra^P\prod_e a_e =  [\mathcal{I}]!\la \Ga',\ga' \ra^P
$$

And hence passing to the standard normalization we find:

$$
\la\Ga,\ga\ra = \frac{\la\Ga,\ga\ra^P}{[\mathcal{I}]!} =  
\frac{\la \Ga',\ga' \ra^P}{\prod_e a_e} = \la \Ga',\ga' \ra
$$

concluding the proof.
\end{proof}

\subsection{Evaluation of spin networks using the shadow formula}
\lbl{sub.recoupling}

In this subsection we describe a way of evaluating spin networks in terms
of the shadow formula. We restrict ourselves to trivalent $\Ga$. Since 
$\Ga$ is supposed to be 
orientable, we can choose a blackboard framed diagram $D$ of $\Ga$.

The shadow formula expresses the evaluation in terms of 
a multi-dimensional sum of $1j$, $3j$ and $6j$-symbols. The latter are
the evaluations of three basic spin networks, shown in Figure \ref{fig.3j6j}.

\begin{figure}[htpb]
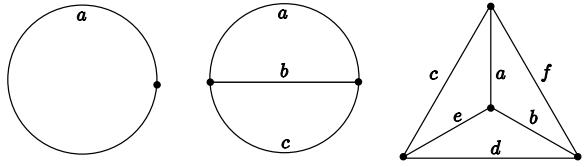

$$
\psdraw{3j6j}{3in}
$$
\caption{Three basic spin networks.}\lbl{fig.3j6j}
\end{figure}
\noindent
Let 
\begin{equation}
\lbl{eq.qmulti}
\qbinom{a}{a_1, a_2, \dots, a_r}=\frac{[a]!}{[a_1]! \dots [a_r]!}
\end{equation}
denote the quantum multinomial coefficient when $a_1+ \dots a_r=a$. 
The value of the $1j$, $3j$ and $6j$-symbols is given by the
following lemma of \cite{KL} (see also, \cite{MV,We}), using our 
normalization.

\begin{lemma}
\lbl{lem.136j}
\rm{(a)} We have
$$ 
\la \psdraw{1j}{0.2in}\, ,\,a \ra=(-1)^a[a+1]
$$
\rm{(b)} Let $(\Theta,\ga)$ denote the $\Theta$ spin network admissibly 
colored by $\ga=(a,b,c)$ as in Figure \ref{fig.3j6j}.
Then we have
\begin{equation}
\lbl{eq.3jsum}
\la \Theta, \ga \ra 
= (-1)^{\frac{a+b+c}{2}}[\frac{a+b+c}{2}+1]
\qbinom{\frac{a+b+c}{2}}{\frac{-a+b+c}{2}, \frac{a-b+c}{2}, \frac{a+b-c}{2}}
\end{equation}
\rm{(c)}
Let $(\psdraw{tetra}{0.18in},\ga)$ denote a tetrahedron labeled and 
oriented as in Figure \ref{fig.3j6j} and admissibly colored by
$\ga=(a,b,c,d,e,f)$. Then we have
\begin{equation}
\lbl{eq.6jsum}
\la \psdraw{tetra}{0.18in}, \ga \ra =
\sum_{k = \max T_i}^{\min S_j} (-1)^k [k+1]
\qbinom{k}{
S_1-k , S_2-k , S_3-k , k- T_1 , k- T_2 , k- T_3 , k- T_4}
\end{equation} 
where
$S_i$ are the half sums of the labels in the three quadrangular curves in 
the tetrahedron 
and $T_j$ are the half sums of the thee edges emanating from a given vertex. 
In other words, the $S_i$ and $T_j$ are given by
\begin{equation}
\lbl{eq.Si}
S_1 = \frac{1}{2}(a+d+b+c)\qquad S_2 = \frac{1}{2}(a+d+e+f) 
\qquad S_3 = \frac{1}{2}(b+c+e+f)
\end{equation}
\begin{equation}
\lbl{eq.Tj}
T_1 = \frac{1}{2}(a+b+e) \qquad T_2 = \frac{1}{2}(a+c+f)
\qquad T_3 = \frac{1}{2}(c+d+e) \qquad T_4 = \frac{1}{2}(b+d+f).
\end{equation}
\end{lemma}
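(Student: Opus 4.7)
My plan is to derive all three formulas by converting the standard Kauffman--Lins (KL) evaluations of the colored unknot, theta-graph, and tetrahedron (see \cite[Chapter~3]{KL}) into the normalization of Definitions \ref{def.evalP} and \ref{def.eval}. Since the paper's Jones--Wenzl differs from KL's by a factor of $[b]!$ per edge (the $1/[b]!$ in KL is omitted), and the standard evaluation is $\la \Ga,\ga\ra = \la \Ga,\ga\ra^P/[\mathcal{I}]!$, the task for each of the three networks reduces to a careful bookkeeping of factorials, equating
$$
\la \Ga,\ga\ra \;=\; \frac{\prod_{e \in E(\Ga)}[\ga_E(e)]!}{[\mathcal{I}]!}\,\Ga^{\mathrm{KL}}(\ga)
$$
with the stated closed-form expression.

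For part (a), I interpret the unknot via the remark after Definition \ref{def.eval} as subdivided by a trivial $(a,a,0)$-vertex; the projector consists of $a$ parallel strands so $[\mathcal{I}]!=[a]!$, and combining this with the per-edge Jones--Wenzl factor and the KL loop value $(-1)^a[a+1]$ yields the claimed formula after cancellation. For part (b), at each of the two vertices of $\Theta$ the projector contains bundles of sizes $(a+b-c)/2$, $(a-b+c)/2$, $(-a+b+c)/2$, so $[\mathcal{I}]!$ equals the square of the product of their quantum factorials. Starting from the classical KL theta value
$$
\Theta^{\mathrm{KL}}(a,b,c)=(-1)^{(a+b+c)/2}\,\frac{[(a+b+c)/2+1]!\,[(-a+b+c)/2]!\,[(a-b+c)/2]!\,[(a+b-c)/2]!}{[a]!\,[b]!\,[c]!}
$$
and multiplying by $[a]![b]![c]!/[\mathcal{I}]!$ leaves exactly one copy of each half-difference factorial in the denominator, yielding the quantum multinomial in \eqref{eq.3jsum}.

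For part (c), I would apply the same conversion to the classical Racah sum for the KL tetrahedron,
$$
\mathrm{Tet}^{\mathrm{KL}}(\ga) = \frac{\prod_{i,j}[S_j-T_i]!}{\prod_e[\ga_E(e)]!}\sum_{k=\max T_i}^{\min S_j}\frac{(-1)^k\,[k+1]!}{\prod_i[k-T_i]!\,\prod_j[S_j-k]!}.
$$
The key observation is that the three bundles at each vertex $T_{i_0}$ of the tetrahedron, each of the form $T_{i_0}-(\text{third edge})$, can be expressed as $S_j-T_i$ with $i\neq i_0$, and across the four vertices these twelve identifications hit every pair $(i,j)\in\{1,2,3,4\}\times\{1,2,3\}$ exactly once. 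Consequently $[\mathcal{I}]!=\prod_{i,j}[S_j-T_i]!$, so multiplying by $\prod_e[\ga_E(e)]!/[\mathcal{I}]!$ cancels the KL prefactor. The identity $(S_1-k)+(S_2-k)+(S_3-k)+(k-T_1)+(k-T_2)+(k-T_3)+(k-T_4)=k$, a consequence of $S_1+S_2+S_3=T_1+T_2+T_3+T_4=a+b+c+d+e+f$, ensures that the summand becomes $(-1)^k[k+1]\,\qbinom{k}{S_1-k,S_2-k,S_3-k,k-T_1,k-T_2,k-T_3,k-T_4}$ as in \eqref{eq.6jsum}.

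The principal difficulty is the bijection claim in part (c): one must verify case-by-case (using \eqref{eq.Si}--\eqref{eq.Tj}) that at each of the four vertices the three bundle sizes $T_{i_0}-(\text{third edge})$ coincide with $\{S_j-T_i : i\neq i_0,\,j\in\{1,2,3\}\}$, and that these twelve pairings across the four vertices are all distinct. Once this bijection is tabulated, the remaining cancellation of factorials in \eqref{eq.6jsum} is immediate.
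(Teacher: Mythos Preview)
The paper does not actually prove this lemma: it is stated with a citation to \cite{KL} (and \cite{MV,We}) together with the phrase ``using our normalization,'' and no further argument is given. Your proposal carries out precisely the normalization conversion that the paper leaves implicit, and your bookkeeping is correct---in particular, the bijection in part~(c) between the twelve vertex bundle sizes and the twelve differences $S_j-T_i$ checks out against \eqref{eq.Si}--\eqref{eq.Tj}, so $[\mathcal{I}]!=\prod_{i,j}[S_j-T_i]!$ cancels the KL prefactor as claimed. In short, your approach is the same as the paper's (reduce to \cite{KL}), only made explicit.
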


In addition to the three basic spin networks above, we choose to consider a 
variant of the
tetrahedron that represents a crossing. The crossing can be reduced to a 
$6j$-symbol using
the half twist formula, \cite{MV}.

$$
\psdraw{Crossing3}{0.4in} = 
(-1)^{\frac{-r_1-r_3+r_2+r_4}{2}}q^{\frac{-r_1(r_1+2)-r_3(r_3+2)+r_2(r_2+2)+r_4(r_4+2)}{8}}
\psdraw{Crossing4}{0.4in}
$$

We are now ready to explain how to evaluate a spin network using the shadow 
formula. 
A blackboard framed diagram of $\Ga$ gives rise to a planar graph $D$, whose
edges are colored by $\ga$ and whose vertices are either vertices of $\Ga$ 
or crossings.
Let $V,E,F,C$ denote the sets of trivalent vertices, edges, faces and 
crossings ($4$-valent vertices) of $D$. 
We will express the evaluation of the quantum spin network as a sum over 
all admissible colorings of $F$ by natural numbers.
Here admissible means that for any two faces colored $r_1,e,r_2$ and 
separated by edge $e$, the
numbers $(r_1,r_2,\ga(e))$ satisfy the triangle inequalities. Moreover 
the outer face should get color $0$.
\begin{equation}
\lbl{eq.shadow}
\la \Ga,\ga\ra = \sum_{r} 
\prod_{f\in F}\psdraw{1j}{0.15in}\prod_{e\in E}\Theta^{-1}
\prod_{v \in V}\psdraw{tetra}{0.18in}\prod_{c \in C}\psdraw{crossing}{0.18in}
\end{equation}
The sum is over admissible colorings $r$ of the faces $F$ and the labels 
of the
symbols in the formula are found from the coloring and $\ga$ as follows. 
The 
$1j$ symbol gets the color of the face it corresponds to. The theta gets 
the color of 
the edge it corresponds to plus the two colors of the faces it bounds. For 
every vertex the corresponding $6j$ symbol is colored by the three adjacent 
faces and the three adjacent edge labels. Finally every crossing is 
counted by a skew 
$6j$-symbol obtained by encircling the crossing and coloring according 
to the faces one crosses.
For a more elaborate discussion see \cite{Co}.

Consider the following simple example of a spin network evaluation using 
the shadow formula. 

\begin{figure}
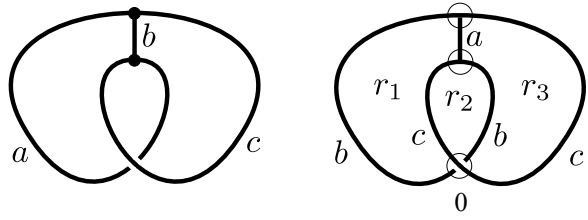

$$
\psdraw{ShadowExample}{3in}
$$
\caption{An example spin network $\Ga,\ga$ (left) and its diagram with 
encircled crossings and vertices and a coloring of the regions.}
\end{figure}

Note that one can find the contributing $6j$-symbols from the diagram by 
encircling the vertices and crossings. This is useful in
keeping track of the labels. Similarily one could also encircle the edges 
and faces to obtain the $3j$ and $1j$-symbols.
$$
\la\Ga,a,b,c\ra = 
\sum_{r_1,r_2,r_3} \frac{\psdraw{1j}{0.2in}^{r_1}\psdraw{1j}{0.2in}^{r_2}
\psdraw{1j}{0.2in}^{r_3}
\psdraw{SE6j}{2in}}{\Theta(0,b,r_1)\Theta(0,c,r_3)\Theta(r_1,c,r_2)\Theta(r_2,b,r_3)\Theta(r_1,a,r_3)}
$$

Since the sum is over admissible colors only, we see that in this special 
case there is quite some simplification. 
The fact that the outside color is $0$ forces $r_1 = b$, $r_2 = c$. Actually 
one can check that after substituting the 
formulas for the $6j$, $3j$ and $1j$-symbols one gets that the answer is 
$0$ unless $a = 0$ and $b=c$ in which case we get a twisted unknot.

\subsection{Integrality}
\lbl{sub.integrality}

In \cite{Co} it was shown that $\la \Ga, \ga\ra \in A^f\mathbb{Z}[A^{\pm 2}]$ 
for some $f$ depending on $(\Ga,\ga)$. In this subsection we use the shadow 
formula to extend this result to prove that 
actually $\la \Ga, \ga\ra \in A^{Q(\ga)}\mathbb{Z}[A^{\pm 4}]$.
Here $Q(\ga)$ is a quadratic form depending on $\Ga$ in the labels $\ga$, 
that takes values in $\BZ/4\BZ$. The proof below will indicate how to obtain 
an expression 
for $Q(\ga)$ from an admissible coloring of the faces of a diagram of $\Ga$. 
The form itself
is independent of the choice of diagram, and coloring. By identifying 
$\BZ/4\BZ$ with $\{0,1,2,3\}$
we have defined $\lla \Ga,\ga\rra= A^{-Q(\ga)}\la \Ga, \ga\ra$ precisely.

\begin{proof}(of Proposition \ref{prop.1}).
Note that by Lemma \ref{lem.trivalent} we can restrict ourselves to trivalent 
$\Ga$. Pick any blackboard framed diagram $D$ of $(\Ga,\ga)$ and consider 
applying the shadow formula \eqref{eq.shadow}. Since by Costantino's result 
\cite{Co}
we already know the evaluation is a polynomial in $A$ we can look at the 
degree of
the terms in the sum.

Now define $\phi:\mathbb{Z}[A^{\pm 1}]\to \mathbb{Z}/4\mathbb{Z}$ to be a 
function such that $P(a)\in A^{\phi(P)}\mathbb{Z}[A^{\pm 4}]$. As a first step
we calculate $\phi$ for the building blocks of the shadow formula.
By dividing the leading powers of the denominator in the formulas for the 
$6j$-symbol we see that:
\begin{eqnarray*}
\phi([k]) &=& 2k-2 \\
\phi(\Theta,a,b,c) &=& a^2+b^2+c^2 \\
\phi(\psdraw{tetra}{0.18in},\ga) &=& \sum_{i}\ga_{i}^2 + \sum_{i<j}\ga_i \ga_j 
\\
\phi(\psdraw{Crossing2}{0.5in}) &=& 
\frac{-r_1^2-r_3^2+r_2^2+r_4^2}{2}-r_1-r_3+r_2+r_4+\phi(\psdraw{tetra}{0.18in})
\end{eqnarray*}
It suffices to show that every term in the shadow 
formula has the same value of $\phi$ (modulo $4$).
In order to check this we consider the effect on $\phi$ of a term when we 
increase one of the variables $r$ by two. This is sufficient since a simple 
argument
shows that any state can be reached from any other state by repeatedly 
increasing
or decreasing one of the variables by two.

So let $\phi(r)$ be the value of $\phi$ on a particular summand in which we 
ignore the terms not containing $r$.
With respect to the region labeled $r$ we make a distinction between the 
edges and regions directly adjacent to it (notation:
$e | r$ or $r_i | r$) and those edges and regions transverse to the region 
$r$ (notation: $e \perp r$). See also Figure \ref{fig.Regions}.

\begin{figure}[htpb]
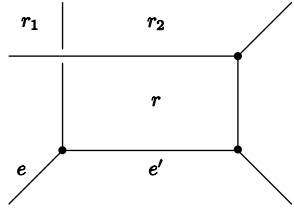

$$
\psdraw{Regions2}{1.5in}
$$
\caption{The region labeled $r$. The region $r_1$ the edge $e$ are transverse 
to $r$. The region $r_2$ and the edge $e'$ are adjacent to 
$r$.}\lbl{fig.Regions}
\end{figure}

Looking at the above formulae for $\phi$ and counting the crossings 
crossings next to $r$ 
with sign $\s$, 
we get the following expression for $\phi(r)$. Here we abused the notation to
make $r_i$ denote both the face and its value and $\s (r_i)$ is the sign of 
the corresponding
crossing between $r$ and $r_i$. 
$$
\phi(r) = r^2(\text{\#adjacent edges} +\text{\#adjacent vertices} 
+ \frac{1}{2}\text{\#crossings}+\text{\#unsigned crossings})+
$$
$$ 
r(2+2\sum_{e | r}\ga(e) +2\sum_{r_i | r}r_i + \sum_{e \perp r}\ga(e) 
+\sum_{r_i \perp r} r_i+\s(r_i))
$$

We will show that $\Delta = \phi(r+2)-\phi(r)$ is divisible by $4$. Since 
modulo $4$ we have 
$$
\Delta \equiv 2((r+1)\text{\#crossings}+\sum_{e \perp r}\ga(e) 
+\sum_{r_i \perp r} r_i+\s(r_i))\ \mod 4
$$
we need to check that the term in brackets is even. To do so we use 
admissibility and write congruences modulo $2$. By admissibility of the 
labels at the trivalent vertices and admissibility of the thetas 
coming from the edges in the shadow formula respectively we have:
$$
\ga(e) \equiv \ga(e') + \ga(e'')  \quad \quad r_i \equiv \ga(e_1) 
+ \ga(e_2) + r \quad \mod 2
$$
Here $e,e',e''$ are the edges at a vertex and $e_1,e_2,r,r_i$ are the edges 
and two opposite 
regions at a crossing. Summing all these identities we see that every edge 
adjacent to $r$ appears 
exactly twice, which shows that 
$$
\sum_{e \perp r}\ga(e) +\sum_{r_i \perp r} r_i \equiv r\text{\#crossings} \mod 2
$$
Finally $\sum_{r_i \perp r} \s(r_i) \equiv \text{\#crossings}$, 
therefore $\Delta$ is divisible by $4$ and the proof is complete.
\end{proof}

\section{Proof of Theorems \ref{thm.Gfun}, \ref{thm.der}, 
\ref{thm.qholo} and \ref{thm.expbound}}
\lbl{sec.proofs}

\subsection{Behavior of $q$-holonomic sequences under differentiation}
\lbl{sub.der}

In this section we prove Theorem \ref{thm.der}.

\begin{proof}
Consider a $q$-holonomic sequence $f_n(q) \in \BQ(q)$ that satisfies a linear
recursion relation
$$
\sum_{j=0}^d a_j(q^n,q) f_{n+j}(q)=0
$$
where $a_j(u,v) \in \BQ[u,v]$ for $j=0,\dots,d$. In the proof, the choice of 
coefficients $\BQ[u,v]$ versus $K[u,v]$ for a number field $K$ does not play an
important role, and therefore we assume $K=\BQ$. 
Differentiate with respect
to $q$. We obtain that
\begin{equation}
\lbl{eq.df1}
\sum_{j=0}^d a_j(q^n,q) f'_{n+j}(q)
+ n q^{n-1} \sum_{j=0}^d a_{j,u}(q^n,q) f_{n+j}(q) 
+ \sum_{j=0}^d a_{j,v}(q^n,q) f_{n+j}(q) 
=0
\end{equation}
where $a_{j,u}=\pt a_j/\pt u$, $a_{j,v}=\pt a_j/\pt v$ and
$f'(q)=d f(q)/dq$.
Recall now that the product and the $\mathbb{Q}[q]$-linear combination of 
two $q$-holonomic sequences is
$q$-holonomic; see \cite{Z,WZ1}. Lemma \ref{lem.nq} implies that for every 
$j$, the sequences $(n q^{n-1} a_{j,u}(q^n,q) f_{n+j}(q))$ and 
$(a_{j,v}(q^n,q) f_{n+j}(q))$ are $q$-holonomic. It follows that
the second and third sum in Equation \eqref{eq.df1} is $q$-holonomic, i.e.,
it satisfies a linear recursion relation with coefficients in $\BQ[q^n,q]$.
Substituting in this linear recursion the first sum of Equation 
\eqref{eq.df1}, it follows that the sequence $f'_n(q)$ is $q$-holonomic.
\end{proof}

\begin{lemma}
\lbl{lem.nq}
For every fixed integer $c$, the sequence $(n q^{c n})$
is $q$-holonomic.
\end{lemma}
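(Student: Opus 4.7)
The plan is to exhibit an explicit linear recursion for $f_n(q) := n q^{cn}$ whose coefficients are polynomials in $q$ (and hence trivially in $q^n$ and $q$). The natural candidate comes from observing that $q^{cn}$ itself is killed by the first order $q$-operator $E - q^c$, where $E$ denotes the shift $n \mapsto n+1$, so one expects $n q^{cn}$ to be killed by the square $(E - q^c)^2$, just as in the classical theory of constant-coefficient linear recursions a double root at $\lambda$ produces a solution $n \lambda^n$.

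Concretely, I would proceed as follows. First I would note that
\[
(E - q^c)(n q^{cn}) \;=\; (n+1) q^{c(n+1)} - q^c \cdot n q^{cn} \;=\; q^{c(n+1)},
\]
which is itself annihilated by $E - q^c$ since $(E - q^c)(q^{c(n+1)}) = q^{c(n+2)} - q^c q^{c(n+1)} = 0$. Composing, this gives the recursion
\[
f_{n+2}(q) \;-\; 2 q^c\, f_{n+1}(q) \;+\; q^{2c}\, f_n(q) \;=\; 0,
\]
which is easily verified directly by factoring out $q^{c(n+2)}$ and checking that $(n+2) - 2(n+1) + n = 0$. The coefficients $1$, $-2q^c$, $q^{2c}$ lie in $\BQ[q^{\pm 1}] \subset \BQ[q^n, q]$ (when $c \geq 0$; for $c<0$ one simply clears denominators by multiplying through by $q^{-2c}$, yielding coefficients in $\BQ[q]$). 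With leading coefficient $1 \neq 0$, this is precisely a $q$-holonomic recursion of order two, so $(n q^{cn})$ is $q$-holonomic as claimed.

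There is really no obstacle here; the only thing to be careful about is that the definition requires the coefficients to be polynomials in $q^n$ and $q$ (not in $n$), which is why one needs order two rather than the obvious order-one relation $n f_{n+1} = q^c(n+1) f_n$. The displayed recursion has coefficients independent of $q^n$ altogether, so admissibility is immediate.
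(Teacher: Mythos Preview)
Your proof is correct and matches the paper's approach exactly: the paper simply states the same second-order recursion $f_{n+2}(q)-2q^c f_{n+1}(q)+q^{2c} f_n(q)=0$ and leaves the verification to the reader. Your additional explanation via $(E-q^c)^2$ and the remark about why order two (rather than the order-one relation with $n$-coefficients) is needed are helpful but not in the original.
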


\begin{proof}
It is easy to verify that the sequence $f_n(q)=n q^{c n}$ satisfies 
linear recursion relation
$$
f_{n+2}(q)-2q^c f_{n+1}(q)+q^{2c} f_n(q)=0
$$ 
\end{proof}

\subsection{Evaluations of $q$-holonomic sequences at roots of unity
are holonomic}
\lbl{sub.qholo}

Consider a $q$-holonomic sequence $f_n(q) \in \BZ[q^{\pm 1}]$ that satisfies
a linear recursion relation of the form
\begin{equation}
\lbl{eq.cj}
c_d(q^n,q) f_{n+d}(q) + \dots + c_0(q^n,q) f_n(q)=0
\end{equation}
for all $n$ where $c_j(u,v) \in \BQ[u,v]$ are polynomials in two variables
for $j=0,\dots,d$. The field $\BQ$ of coefficients does not matter here
and can be replaced by $\BC$ without any change. 
Fix a complex root of unity $\zeta$.
The idea of the proof consists of a lucky case, and a general case which
reduces to a lucky case after sufficient differentiation. The 
differentiation trick appears in an efficient algorithm to compute the Kashaev
invariant of some knots in \cite{GL1}, and was also inspired by conversations
of the first author with D. Zagier.
Let us first discuss the proof of Theorem \ref{thm.qholo} when 
$\zeta=1$. In the lucky case, the set
$$
S=\{j \, | 0 \leq j \leq d, \,\, c_j(1,1) \neq 0\}
$$
is non-empty. Expand \eqref{eq.cj} as a power series in $q-1$.
The vanishing of the constant term implies that $f_n(1)$ satisfies a 
non-trivial linear recursion with constant coefficients
$$
\sum_{j \in S} c_j(1,1) f_{n+j}(1)=0
$$
Thus, the sequence $(f_n(1))$ is holonomic. 
In the general case, there exists a unique natural number 
$s \geq 0$ such that $c_j(q^n,q)=\ga_j(q-1)^s + O((q-1)^{s+1})$ and some 
$\ga_j$ is nonzero. In other words, 
$c_j(q^n,q)$ vanish to order $s-1$ at $q=1$ for all $i$, and some $c_j(q^n,q)$ 
does not vanish to order $s$ at $q=1$.
In that case, observe that
$$
\sum_{j \in S'} c^{(s)}_j(1,1) f_{n+j}(1)=0
$$
where 
$$
S'=\{j \, | 0 \leq j \leq d, \,\, \ga_j \neq 0\} \neq \emptyset
$$
and $ c^{(s)}_j(1,1) \in \BQ[n]$. 
This concludes the proof of Theorem \ref{thm.qholo} when $\zeta=1$.

Suppose now that $\zeta$ is a complex root of unity of order $N$.
The problem is that $c^{(s)}_j(\z^{n},\z)$ is no longer a polynomial of $n$
even when $s=0$. To overcome this obstacle, 
we consider $n$ to be in a fixed arithmetic progression modulo $N$. 
In other words, 
fix $i$ with $0 \leq i \leq N-1$ and replace $n$ by $Nn+i$ in \eqref{eq.cj}.
Then, $c_j(q^{Nn+i},q)$ can be expanded in powers of $q-\z$ with coefficients
in $\BQ(\zeta)[n]$ for all $i$ and $j$. In other words, for all $i,j$ we
have $c_{ij}(q):=c_j(q^{Nn+i},q) \in \BQ(\zeta)[n][[q-\zeta]]$. Let us consider
the column vector $x_n=(f_{Nn}(\zeta),\dots,f_{Nn+N-1}(\zeta)) \in \BC^N$.
If $c_{i,j}(\zeta) \neq 0$, then $f_{Nn+i+j}(\zeta)$ is a $\BQ(\z)[n]$-linear 
combination of $f_{Nn+i+j'}(\zeta)$ for $j' \neq j$. Of course, $f_{Nn+i+j}(\zeta)$
is the $k$-th coordinate of $x_{n}$ where $k \equiv i+j \bmod N$.

Now we consider two cases. In the lucky Case 1, the following set
$$
S=\{(i,j) \in [0,N-1] \times [0,d] \, | \,\, c_{i,j}(\zeta) \neq 0\}
$$
is non-empty. $S$ defines a graph $G(S)$ defined as follows. It has vertex 
set $\{0,1,\dots,N-1\}$,
and an edge $e$ between vertices $k$ and $k'$ iff there exist $(i,j),(i,j') 
\in S$ such that $k \equiv i+j \bmod N$ and $k' \equiv i+j' \bmod N$.
We consider two subcases. In the very lucky Case 1.1, $G(S)$ is connected. The
above discussion implies that the vector $x_n$ and finitely many of its
translates, put together in a column vector $y_n$ satisfy a first order
linear recursion of the form
$$
y_{n+1}=A(n)y_n
$$ 
for a square matrix $A(n)$ with coefficients in the field $\BQ(\zeta)(n)$.
Lemma \ref{lem.cyclic} below implies that every 
coordinate of $y_n$ is holonomic. It follows that $(f_{Nn+i}(\zeta))$
is holonomic (with respect to $n$) for every fixed $i \in [0,N-1]$.
Since $(a_n)$ is holonomic if and only if $(a_{Nn+i})$ is holonomic for
all $i \in [0,N-1]$ (see \cite{PWZ}), it follows that $f_n(\zeta)$ is
holonomic in the very lucky Case 1.1. 

In the not-so-lucky Case 1.2, the set $S$ is non-empty but the graph $G(S)$
is disconnected. In that case, differentiate \eqref{eq.cj} once and consider
the pair of column vectors 
$x^{r}_n=(f_{Nn}^{(r)}(\zeta),\dots,f^{(r)}_{Nn+N-1}(\zeta)) \in \BC^N$ for $r=0,1$.
Consider also the set
$$
S^r=\{(i,j) \in [0,N-1] \times [0,d] \, | \,\, c_{i,j}^{(r)}(\zeta) \neq 0\}
$$
for $r=0,1$. $S^0 \cup S^1$ gives rise
to a graph $G(S^0 \cup S^1)$ with vertices pairs $(r,k)$ for $r=0,1$ and
$k=0,\dots,N-1$. There is an edge between $(r,k)$ and $(r',k')$ if there
exist $(i,j) \in S^{r}$, $(i,j') \in S^{r'}$ such that $k \equiv i+j \bmod N$ 
and 
$k' \equiv i+j' \bmod N$. 
Note that by assumption, $S^0=S$ is non-empty, and there is a natural 
inclusion of $G(S)$ into $G(S^0 \cup S^1)$.
In the not-so-lucky Case 1.2, the graph $G(S^0 \cup S^1)$ is connected.
The argument of Case 1.1 shows that $f_n(\zeta)$ is holonomic.
If $G(S^0 \cup S^1)$ is disconnected, differentiate \eqref{eq.cj} once more.
It is easy to see that after a finite number $t$ of differentiations, the graph
$G(S^0 \cup \dots S^t)$ will be connected and consequently $f_n(\zeta)$
is holonomic. Differentiating \eqref{eq.cj} $t$ times and using induction
implies that $f_n(\zeta)$ is holonomic.

This concludes the proof of Theorem \ref{thm.qholo} when $S$ is non-empty.
In the unlucky Case 2 where all functions $c_{i,j}(q)$ vanish to first
order at $q=\zeta$, there is a natural number $s$ such that 
$c_{i,j}^{(s')}(\zeta)=0$ for all $s' < s$ and all $i,j$ and in addition
$c_{i,j}^{(s)}(\zeta) \neq 0$ for some $i,j$. In that case, replace $S$ by 
the set 
$$
S'=\{(i,j) \, | 0 \leq j \leq d, \,\, 0 \leq i < N-1, \,\,
c_{i,j}^{(s)}(\zeta) \neq 0\}
$$
and repeat the above proof.

\begin{lemma}
\lbl{lem.cyclic}
Consider a sequence of vectors $x_n \in \BC^l$ that satisfy a first order 
linear recursion of the form
$$
x_{n+1}=A(n) x_n
$$
where $A(n)$ is an $l\times l$ matrix with coefficients in $\BC[n]$. Then
every coordinate of $x_n$ is a holonomic sequence.
\end{lemma}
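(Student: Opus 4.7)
The plan is to reduce the problem to a finite-dimensional linear algebra computation: I will show that every shift of every coordinate of $x_n$ lies in the $\BC(n)$-span of a fixed $l$-tuple, and then extract a linear recursion with polynomial coefficients by the standard rank/Cayley--Hamilton argument.

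Write $f_n^{(i)} := (x_n)_i$ for $i = 1, \dots, l$. Iterating the first order recursion $x_{n+1} = A(n) x_n$ gives
\[
x_{n+k} = A(n+k-1)\,A(n+k-2)\,\cdots\,A(n)\, x_n =: M_k(n)\,x_n,
\]
so the entries of $M_k(n)$ lie in $\BC[n]$. Reading off the $i$-th row produces polynomials $p_{k,j}(n) \in \BC[n]$ with $f_{n+k}^{(i)} = \sum_{j=1}^l p_{k,j}(n)\, f_n^{(j)}$ for every $k \geq 0$. I would then form the $(l+1) \times l$ matrix $P(n) := (p_{k,j}(n))_{0 \leq k \leq l,\ 1 \leq j \leq l}$. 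Over the field $\BC(n)$, this matrix has rank at most $l$, hence its $l+1$ rows admit a nontrivial linear dependence: there exist rational functions $\alpha_0(n), \dots, \alpha_l(n) \in \BC(n)$, not all zero, with $\sum_{k=0}^l \alpha_k(n)\, p_{k,j}(n) = 0$ for every $j$. Clearing denominators yields polynomials $c_0(n), \dots, c_l(n) \in \BC[n]$, not all identically zero, satisfying $\sum_{k=0}^l c_k(n)\, p_{k,j}(n) = 0$ as polynomial identities in $n$. Combining with the formula for $f_{n+k}^{(i)}$ gives
\[
\sum_{k=0}^l c_k(n)\, f_{n+k}^{(i)} \;=\; \sum_{j=1}^l f_n^{(j)} \sum_{k=0}^l c_k(n)\, p_{k,j}(n) \;=\; 0
\]
for all $n$, which is the sought holonomic recursion.

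The step requiring the most care is ensuring the extracted recursion is genuinely nontrivial in the sense of the paper's definition: the $c_k(n)$ are polynomials witnessing a $\BC(n)$-linear dependence of rows, and after discarding trailing zero terms and reindexing we may assume the highest-indexed coefficient $c_d(n)$ is not the zero polynomial, which is the form demanded. Apart from this verification, the proof is just a rank count inside the at most $l$-dimensional $\BC(n)$-vector space spanned by $f_n^{(1)}, \dots, f_n^{(l)}$, on which the shift operator $N$ acts; no serious obstacle arises, since the lemma is essentially the Cayley--Hamilton theorem applied to $N$ restricted to that subspace.
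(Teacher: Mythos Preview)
Your proof is correct and the argument is complete: the $(l+1)$ row vectors $(p_{k,1}(n),\dots,p_{k,l}(n))$ for $k=0,\dots,l$ live in the $l$-dimensional space $\BC(n)^l$, so a nontrivial $\BC(n)$-relation exists, and clearing denominators plus trimming trailing zeros produces a recursion of the required shape.

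Your route is genuinely different from the paper's. The paper introduces the Ore ring $R=\BC[\nu]\langle N\rangle$ (with $N\nu=\nu N+N$), views the matrix $A(n)$ as defining a finite $R$-module, and then invokes the cyclic vector theorem of van der Put--Singer (and Katz) to conclude. That is a structural statement about the whole module: it is generated over $R$ by a single element, from which holonomicity of every coordinate follows. Your argument bypasses this machinery entirely and treats each coordinate on its own by an elementary rank count over $\BC(n)$. What the paper's approach buys is a clean link to the general theory of difference modules and a pointer to existing implementations; what yours buys is a self-contained one-paragraph proof with no external references. One small caveat on your closing remark: calling this ``Cayley--Hamilton for $N$'' is only a heuristic, since $N$ is not $\BC(n)$-linear on the span of the $f_n^{(j)}$ (the relation $Nn=(n+1)N$ obstructs this). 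But your actual argument never uses that description---it rests purely on the pigeonhole in $\BC(n)^l$, which is sound.
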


\begin{proof}
Consider the operators $\nu$ and $N$ which act on a sequence $(a_n)$ by
$$
(\nu a)_n=n a_n, \qquad (N a)_n=a_{n+1}.
$$
These operators satisfy the commutation relation $N\nu=\nu N+N$ and generate
an Ore ring $R=\BC[\nu]\la N \ra$. The matrix $A(n)$ defines an $R$-module $M$
as in \cite{VPS}. The lemma follows from the {\em cyclic vector theorem} 
applied to the module $M$; see \cite[Prop.2.9]{VPS} and also \cite{Kz}.
The proof is constructive and can be implemented for example in \cite{AB}.
\end{proof}

This finishes the proof of Theorem \ref{thm.qholo}.
\qed

Note that Theorem \ref{thm.qholo} fails when $q$ is not a root of unity. 
For example, consider the $q$-holonomic sequence $a_n(q) = q^{n^2}$ and 
fix $q = \omega$, a complex number which is not a root of unity.
Suppose that the sequence $b_n = \omega^{n^2}$ is holonomic. So for
all natural numbers $n$ we have
$$
\sum_{k=0}^d c_k(n) b_{n+k} =0
$$  
where $c_j(n)\in\mathbb{Q}[n]$.
Dividing by $b_n$ we get
$$
\sum_{k=0}^dc_k(n)\omega^{2nk+k^2} =0
$$  
Collecting the coefficients of a fixed power of $n$, we find that there are 
$C_1\hdots C_D\in \mathbb{C}$ such that
$$
\sum_{k=0}^D C_k \omega^{nk} = 0
$$
In other words, for all $n$, $\omega^n$ is a root of 
the polynomial $\sum_{k=0}^D C_k x^k$. It follows that $\omega^m =1$ for some 
$m$.

\subsection{Proof of Theorem \ref{thm.expbound}}
\lbl{sub.thm.expbound}

In this section we prove Theorem \ref{thm.expbound}. Recall from
Remark \ref{rem.qholo3} that this theorem is special to the quantum spin network 
evaluations and not valid for general $q$-holonomic sequences.

The  main ideas that go into the proof of Theorem 
\ref{thm.expbound} are the following: 
\begin{itemize}
\item[(a)]
its validity for the local building block of 
quantum spin networks (namely the quantum $6j$-symbol), 
\item[(b)]
The shadow formula \eqref{eq.shadow} for the evaluation of an arbitrary quantum spin network
where the summand is roughly a product of quantum $6j$-symbols divided by
quantum $3j$-symbols,
\item[(c)]
a lemma for the growth rate of the coefficients of inverse quantum factorials,
in the spirit of \cite[Thm.15]{GL1}.
\end{itemize}
Let us begin with the first ingredient.

\begin{lemma}
\lbl{lem.expbound}
Theorem \ref{thm.expbound} holds for the quantum $6j$-symbols.
\end{lemma}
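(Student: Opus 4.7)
The plan is to bound each ingredient in the explicit sum formula \eqref{eq.6jsum} for the quantum $6j$-symbol $\la \psdraw{tetra}{0.18in},n\ga\ra(q)$ together with its derivatives at $q=\zeta$. Since Proposition \ref{prop.1} gives
$$
\lla \psdraw{tetra}{0.18in}, n\ga \rra(q) = q^{-Q(n\ga)/4}\, \la \psdraw{tetra}{0.18in}, n\ga\ra(q)
$$
with $Q(n\ga)\in\{0,1,2,3\}$ taking only finitely many values, the prefactor and all its derivatives at $\zeta$ are bounded uniformly in $n$, so by Leibniz it suffices to bound the $r$-th derivative of $\la\psdraw{tetra}{0.18in},n\ga\ra(q)$ at $\zeta$. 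In \eqref{eq.6jsum} applied to the scaled coloring $n\ga$, the summation index $k$ ranges over an interval of length $O(n)$ and each summand is a product $(-1)^k [k+1]\cdot \qbinom{k}{\cdots}(q)$. A further application of Leibniz reduces the task to bounding, for each $0\leq s\leq r$, the $s$-th derivative at $\zeta$ of $[k+1]$ and of the balanced quantum multinomial.

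For the quantum integer $[m]$, as a Laurent polynomial in $q^{1/2}$ of degree $O(m)$ with coefficients in $\{0,\pm 1\}$, the triangle inequality gives that $|[m]^{(s)}(\zeta)|$ is polynomially bounded in $m=O(n)$. For the balanced quantum multinomial, the crucial ingredient is positivity: write
$$
\qbinom{a}{a_1,\dots,a_\ell}(q) = q^{-c/4}\, G(q),
$$
where $c = a^2 - \sum_i a_i^2$ is an integer of size $O(a^2)=O(n^2)$ and $G(q) = \sum_j g_j q^j \in \BN[q]$ is the unbalanced Gaussian multinomial, a polynomial of degree $O(a^2)$ with non-negative integer coefficients summing to $G(1) = \binom{a}{a_1,\dots,a_\ell}\leq \ell^{\,a}$. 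Since $|\zeta|=1$ and $g_j\geq 0$,
$$
|G^{(j)}(\zeta)| \leq (\deg G)^j\, G(1) \leq O(n^{2j})\, \ell^{\,a},
$$
which is exponentially bounded in $n$; the $s$-th derivative of $q^{-c/4}$ at $\zeta$ is a polynomial in $c$ of degree $s$ times a bounded factor, hence polynomially bounded in $n$. Leibniz then yields exponential boundedness of the $s$-th derivative of the balanced multinomial at $\zeta$.

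Summing over the $O(n)$ values of $k$ and over the polynomially many Leibniz terms preserves the exponential bound, after absorbing polynomial factors into a slightly larger base. The main obstacle will be precisely that at a root of unity individual quantum factorials $[m]!(\zeta)$ may vanish, so one cannot treat the balanced multinomial as a bounded quotient of quantum factorials; it is the positivity of the coefficients of the unbalanced Gaussian multinomial, together with $|\zeta|=1$, that allows the multinomial and all its derivatives to be replaced by harmless bounds in terms of the classical multinomial $G(1)$.
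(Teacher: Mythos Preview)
Your argument is correct and follows essentially the same route as the paper: the key ingredient in both is the positivity of the coefficients of the balanced (equivalently, Gaussian) $q$-multinomial, so that its $\ell^1$-norm equals its value at $q=1$, namely the classical multinomial $\leq \ell^{\,a}$, and one then bounds evaluations at $|\zeta|=1$ by the triangle inequality. Your write-up is in fact more explicit than the paper's about the passage from $\lla\cdot\rra$ to $\la\cdot\ra$ and about handling the $r$-th derivative via Leibniz together with the degree bound $\deg G=O(n^2)$; the paper records only the $\|\cdot\|_1$-bound and leaves the derivative case implicit.
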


\begin{proof}
Equation \eqref{eq.6jsum} shows that 
$f_n(q)=\la \psdraw{tetra}{0.18in}, n\ga \ra$ is a Laurent polynomial
in $q^{1/2}$ which is written as a 1-dimensional sum.
 Let $||h(q)||_1$ denote the sum of the absolute values of 
a Laurent polynomial in $q^{1/4}$. It is well known that the quantum
binomial coefficient \eqref{eq.qmulti} is a Laurent polynomial in $q^{1/2}$
with nonnegative integer coefficients. Moreover, its evaluation at $q=1$
is the usual multinomial coefficient. The sum of the latter is at most
$r^a$. It follows that
$$
||\qbinom{a}{a_1, a_2, \dots, a_r}||=\frac{a!}{a_1! \dots a_r!} \leq r^a.
$$
Equation \eqref{eq.6jsum} and the above inequality conclude the proof
of the lemma.
\end{proof}
Now consider an arbitrary quantum spin network $(\Ga,\ga)$
and its evaluation $f_N(q)=\la \Ga,N \ga\ra(q)$.
Viewing $1j$-symbols as special cases of $6j$-symbols and crossings
as a power of $q$ times a $6j$-symbol, the shadow state-sum formula \eqref{eq.shadow}
expresses $f_N(q)$ by 
\begin{equation}
\lbl{eq.fnk}
f_N(q)=\sum_{k \in N P} \frac{M(k)}{D(k)}
\end{equation}
where the summation is over the set of lattice points of $N P$ (for some
rational convex polytope $P$) and
$M(k)$ is a product a power of $q$ and of quantum $6j$-symbols at linear forms of $k$, and
$D(k)$ is a product of quantum factorials at linear forms of $k$.
The fact that $D$ is a product of quantum factorials only follows from the formula for the $3j$-symbol,
see Equation \eqref{eq.3jsum}.
We know that $f_N(q) \in \BZ[q^{\pm 1/4}]$ by Proposition \ref{prop.1}.
Theorem \ref{thm.qholo} implies that the exponents of $q$ in 
$\la \Ga,N \ga\ra(q)$ are bounded above and below by quadratic functions
of $N$. 

Let $(q)_n=\prod_{j=1}^n (1-q^j)$ for $n \in \BN \cup \{\infty\}$. It is
well-known that
$$
\frac{1}{(q)_\infty}=\sum_{k=0}^\infty p_k q^k
$$
where $p_k \in \BN$ is the number of partitions of $k$; see \cite{Aw}. 
Moreover, for every $k$ and $n$, the coefficient of $q^k$ in $1/(q)_n$
is a natural number which is at most $p_k$. In addition, we have
$$
p_k \leq e^{C k^{1/2}+o(1)}
$$
where $C=\pi \sqrt{2/3}$; see \cite{Aw}. It follows that the coefficient
of $q^n$ in $1/D(k)$ is exponentially bounded by $n^2$, and since
$n=O(N^2)$, the coefficient is exponentially bounded by $N$. Since the
number of $k$-summation terms in the state-sum \eqref{eq.fnk} is
bounded by a polynomial function of $N$, this completes the proof of Theorem
\ref{thm.expbound}.
\qed

\begin{remark}
\lbl{rem.expbound2}
The above proof is similar in spirit with the proof of \cite[Thm.15]{GL1}.
\end{remark}

\subsection{Proof of Theorem \ref{thm.Gfun}}
\lbl{sub.thm1}

Fix a quantum spin network $(\Ga,\ga)$. It follows from the shadow state sum 
in Section \ref{sub.recoupling} and Lemma \ref{lem.trivalent} 
that the standard evaluation $\la \Ga,n\ga\ra(q)$ is a sum of a 
balanced $q$-hypergeometric term. Hence, by the fundamental theorem
of WZ theory (see \cite{WZ1}), it follows that $\la \Ga,n\ga\ra(q)$
is $q$-holonomic. Since $\lla \Ga,n\ga\rra(q) = q^{Q(n\ga)/4}\la \Ga,n\ga \ra(q)$
and $Q(n\ga)$ is periodic, $\lla \Ga,n\ga\rra(q)$ is $q$-holonomic as well.

Fix a complex root of unity $\zeta$, and a natural number $r \geq 0$,
and let $a_n=\lla \Ga,n\ga\rra^{(r)}(\zeta)$. We will apply Theorem
\ref{thm.qholo} to check conditions (a)-(d) of Definition \ref{def.Gfunction}.

\begin{itemize}
\item[(a)] 
$a_n \in \BZ[\zeta]$ as follows from Proposition \ref{prop.1}.
\item[(b)]
(b) Theorem \ref{thm.expbound} shows that the coefficients $\lla \Ga,n\ga\rra^(r)(\zeta)$ of the generating
function are exponentially bounded. The same goes for their Galois conjugates since the action of the Galois group
can only replace the root of unity $\zeta$ by another root of unity, so Theorem \ref{thm.expbound} still applies.
\item[(c)]
Since $a_n \in \BZ[\zeta]$ are algebraic integers, 
their common denominator is $1$.
\item[(d)] 
By theorems \ref{thm.der} and \ref{thm.qholo} the coefficients of the 
power series
are holonomic, hence the series itself is holonomic as well.
\end{itemize}
This concludes the proof of Theorem \ref{thm.Gfun}.
\qed

\section{Examples}
\lbl{sec.examples}

In this section we consider various simple examples to illustrate the theorems. For the sake
of simplicity we surpress the function $A^{-Q(\ga)}$ so we choose to work with $\la \Ga, \ga \ra$, 
rather than the more correct $\lla \Ga, \ga \rra$.

\subsection{Evaluations of the q-multinomial coefficients at a root
of unity}

\begin{lemma}
\lbl{lem.qmultinomial}
Let $q = e^{\frac{2\pi i}{N}}$ and set $a_i = A_iN+\alpha_i$. 
We have 
\begin{equation}
\qbinom{a_1+\hdots+a_n}{a_1,a_2,\hdots,a_n}(q) 
=  q^{\frac{1}{2}\sum_{j<k}\alpha_j\alpha_k-a_ja_k}\binom{A_1+\hdots+A_n}{A_1,A_2,\hdots,A_n}
\qbinom{\alpha_1+\hdots+\alpha_n}{\alpha_1,\alpha_2,\hdots,\alpha_n}
(q)
\end{equation}
\end{lemma}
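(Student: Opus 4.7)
The plan is to reduce the claim to a Lucas-type identity for the unbalanced $q$-multinomial coefficients at a primitive $N$-th root of unity. Using the elementary relation $[n]=q^{-(n-1)/2}[n]_q$ with $[n]_q=(q^n-1)/(q-1)$, one obtains the conversion
\[
\qbinom{a}{a_1,\dots,a_n} \;=\; q^{-\frac{1}{2}\sum_{j<k}a_ja_k}\binom{a}{a_1,\dots,a_n}_q,
\]
where $\binom{a}{a_1,\dots,a_n}_q = (q;q)_a/\prod_i(q;q)_{a_i}$ and $(q;q)_m=\prod_{k=1}^m(1-q^k)$. Substituting this into both sides of the claimed identity, the power of $q^{1/2}$ on the left is $-\sum_{j<k}a_ja_k$, while the combined power on the right is $\sum_{j<k}(\alpha_j\alpha_k-a_ja_k)-\sum_{j<k}\alpha_j\alpha_k = -\sum_{j<k}a_ja_k$. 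The phases cancel, so the lemma reduces to the Lucas-type identity
\[
\binom{a_1+\dots+a_n}{a_1,\dots,a_n}_q \;=\; \binom{A_1+\dots+A_n}{A_1,\dots,A_n}\binom{\alpha_1+\dots+\alpha_n}{\alpha_1,\dots,\alpha_n}_q
\]
evaluated at $q=\zeta=e^{2\pi i/N}$ (assuming the normalization $0\le\alpha_i<N$).

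To prove this Lucas-type identity I would use a first-order perturbation argument: set $q=\zeta e^{\epsilon}$ and compute leading asymptotics as $\epsilon\to 0$. The key observation is that $1-q^k$ vanishes to first order in $\epsilon$ precisely when $N\mid k$ (with leading coefficient $-k$), while for $N\nmid k$ it tends to the finite nonzero value $1-\zeta^k$. Grouping the factors of $(q;q)_{AN+\alpha}$ into $A$ blocks of length $N$ followed by a tail of length $\alpha$, and invoking the cyclotomic identity $\prod_{l=1}^{N-1}(1-\zeta^l)=N$, one obtains
\[
(q;q)_{AN+\alpha} \;=\; (-N^2\epsilon)^A\, A!\,(\zeta;\zeta)_\alpha + O(\epsilon^{A+1}).
\]

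Forming the ratio for the $q$-multinomial coefficient and writing $\sum_i\alpha_i=cN+\tilde\alpha$ with $0\le\tilde\alpha<N$, the $\epsilon$-powers in the numerator and denominator cancel precisely when there is no carry ($c=0$); then $A!/\prod_i A_i!$ produces the classical multinomial $\binom{\sum A_i}{A_1,\dots,A_n}$ while the $(\zeta;\zeta)_\alpha$ factors assemble into $\binom{\sum\alpha_i}{\alpha_1,\dots,\alpha_n}_\zeta$, matching the right-hand side. When $c>0$ the ratio is $O(\epsilon^c)\to 0$; applying the same perturbation one level down shows that $\binom{\sum\alpha_i}{\alpha_1,\dots,\alpha_n}_\zeta=0$ as well, so both sides agree. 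The main bookkeeping obstacle is verifying the exact phase cancellation in the reduction step and handling the carry case consistently on both sides; once these are in place, the perturbation argument is essentially routine cyclotomic algebra.
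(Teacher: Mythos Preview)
Your proposal is correct and follows essentially the same route as the paper: convert the balanced $q$-multinomial to the Gaussian (unbalanced) one, picking up the factor $q^{-\frac{1}{2}\sum_{j<k}a_ja_k}$, and then invoke the $q$-Lucas identity for Gaussian multinomials. The paper simply cites \cite{De} for the binomial $q$-Lucas theorem and remarks that the multinomial case and the phase follow immediately, whereas you supply a self-contained perturbation proof of the Lucas step; this is a harmless elaboration rather than a different method.
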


This was proven for Gaussian (asymmetric) binomial coefficients in 
\cite{De}. The extension to the multinomial
case is straightforward and the power of $q$ appears when one converts to 
symmetric multinomial coefficients. Note that
the power of $q$ is actually a sign since $\alpha_j\alpha_k-a_ja_k = 
-N(A_jA_kN+\alpha_jA_k+A_j\alpha_k)$.

Now consider the growth rates of the theta evaluations. Fix and admissible 
triple $a,b,c\in\mathbb{N}$, and set $A = \frac{-a+b+c}{2},\ B 
= \frac{a-b+c}{2},\ C = \frac{a+b-c}{2}$ then we have 
 \[\la \Theta,an,bn,cn\ra = 
\la \psdraw{1j}{0.15in},n(A+B+C)\ra\qbinom{n(A+B+C)}{nA,nB,nC}\]
At $q =1$ we find using {\em Stirling's formula} (see \cite{O}) 
$$
\Lambda_1 = 
\left\{(-1)^{A+B+C}\frac{(A+B+C)^{A+B+C}}{A^AB^BC^C}\right\}
$$
At the $N$--th root of unity we find 
$$
\la \Theta,an,bn,cn\ra(e^{\frac{2\pi i}{N}}) 
= P(n)\binom{\lfloor\frac{ nA}{N}\rfloor
+\lfloor\frac{ nB}{N}\rfloor
+\lfloor\frac{ nC}{N}\rfloor}{\lfloor\frac{ nA}{N}\rfloor,
\lfloor\frac{ nB}{N}\rfloor,\lfloor\frac{ nC}{N}\rfloor}
$$
where $P(n)$ is a periodic function in $n$ of period $2N$. Another 
application of Stirling
shows that $\Lambda_N \subset \Omega_{2N}|\Lambda_1|^{\frac{1}{N}}$,
where $\Omega_N$ denotes the set of all $N$-th roots of unity.

\subsection{The $q$-difference equation of the regular 6j-symbol}
\lbl{sub.recq6j}

We compute a second order recursion relation 
for the regular quantum $6j$-symbol 
$$
a_n(q)= \la  \psdraw{tetra}{0.18in},n \, \ga \ra(q), \qquad \ga=(2,2,2,2,2,2).
$$
using the $q$-WZ method, implemented in {\em Mathematica} by \cite{PaRi}
and used as in \cite{GV}. 
The recursion has the form
\begin{equation}
\lbl{eq.reg6jrec}
\sum_{k=0}^2 c_k(q,q^n) a_{n-k}(q)=0
\end{equation}
where
{\small  
\begin{eqnarray*}
c_0(q,q^n)&=&
q^{2+12 n} \left(q-q^n\right)^3 \left(-1+q^n\right)^5 \left(q+q^n\right) 
\left(4 q^{10}+4 q^{10 n}+6 q^{9+n}+4 q^{8+2 n}-q^{6+3 n}+5 q^{7+3 n}-q^{8+3 n}
\right. \\ & & \left.
-6 q^{5+4 n}-2 q^{6+4 n}-6 q^{7+4 n}-4 q^{4+5 n}+2 q^{5+5 n}-4 q^{6+5 n}-6 q^{3+6 n}
-2 q^{4+6 n}-6 q^{5+6 n}-q^{2+7 n}
\right. \\ & & \left.
+5 q^{3+7 n}-q^{4+7 n}+4 q^{2+8 n}+6 q^{1+9 n}\right)
\end{eqnarray*}
}
{\small  
\begin{eqnarray*}
c_1(q,q^n)&=&
-q^{-1+6 n} \left(q-q^n\right)^3 \left(q-q^{2 n}\right) \left(-4 q^{14}
-4 q^{15}+q^{18 n}+7 q^{19 n}+10 q^{20 n}+10 q^{21 n}+7 q^{22 n}
\right. \\ & & \left.
+q^{23 n}-10 q^{13+n}-8 q^{14+n}-10 q^{15+n}-10 q^{12+2 n}+6 q^{13+2 n}+6 q^{14+2 n}
-10 q^{15+2 n}+q^{10+3 n}
\right. \\ & & \left.
-9 q^{11+3 n}+q^{12+3 n}+28 q^{13+3 n}+q^{14+3 n}-9 q^{15+3 n}+q^{16+3 n}+7 q^{9+4 n}
+4 q^{10+4 n}+14 q^{11+4 n}
\right. \\ & & \left.
+13 q^{12+4 n}+13 q^{13+4 n}+14 q^{14+4 n}+4 q^{15+4 n}+7 q^{16+4 n}+10 q^{8+5 n}
+2 q^{9+5 n}+22 q^{10+5 n}
\right. \\ & & \left.
+36 q^{11+5 n}+36 q^{13+5 n}+22 q^{14+5 n}+2 q^{15+5 n}+10 q^{16+5 n}+10 q^{7+6 n}
-3 q^{8+6 n}-16 q^{9+6 n}+4 q^{10+6 n}
\right. \\ & & \left.
-17 q^{11+6 n}-17 q^{12+6 n}+4 q^{13+6 n}-16 q^{14+6 n}-3 q^{15+6 n}+10 q^{16+6 n}
+7 q^{6+7 n}-3 q^{7+7 n}+q^{8+7 n}
\right. \\ & & \left.
+6 q^{9+7 n}-3 q^{10+7 n}+q^{11+7 n}-3 q^{12+7 n}+6 q^{13+7 n}+q^{14+7 n}-3 q^{15+7 n}
+7 q^{16+7 n}+q^{5+8 n}
\right. \\ & & \left.
-18 q^{6+8 n}-31 q^{7+8 n}-7 q^{8+8 n}-47 q^{9+8 n}-37 q^{10+8 n}-37 q^{11+8 n}
-47 q^{12+8 n}-7 q^{13+8 n}-31 q^{14+8 n}
\right. \\ & & \left.
-18 q^{15+8 n}+q^{16+8 n}-8 q^{5+9 n}-18 q^{6+9 n}+4 q^{7+9 n}-18 q^{8+9 n}-12 q^{9+9 n}
-8 q^{10+9 n}-12 q^{11+9 n}
\right. \\ & & \left.
-18 q^{12+9 n}+4 q^{13+9 n}-18 q^{14+9 n}-8 q^{15+9 n}-9 q^{4+10 n}-8 q^{5+10 n}+9 q^{6+10 n}
-7 q^{7+10 n}+13 q^{8+10 n}
\right. \\ & & \left.
+13 q^{9+10 n}+13 q^{10+10 n}+13 q^{11+10 n}-7 q^{12+10 n}+9 q^{13+10 n}-8 q^{14+10 n}
-9 q^{15+10 n}-4 q^{3+11 n}
\right. \\ & & \left.
-2 q^{4+11 n}-q^{5+11 n}-13 q^{6+11 n}-24 q^{7+11 n}+2 q^{8+11 n}-43 q^{9+11 n}+2 q^{10+11 n}
-24 q^{11+11 n}-13 q^{12+11 n}
\right. \\ & & \left.
-q^{13+11 n}-2 q^{14+11 n}-4 q^{15+11 n}+11 q^{3+12 n}+31 q^{4+12 n}+21 q^{5+12 n}+44 q^{6+12 n}
+53 q^{7+12 n}
\right. \\ & & \left.
+56 q^{8+12 n}+56 q^{9+12 n}+53 q^{10+12 n}+44 q^{11+12 n}+21 q^{12+12 n}+31 q^{13+12 n}
+11 q^{14+12 n}-4 q^{2+13 n}
\right. \\ & & \left.
+18 q^{3+13 n}-12 q^{4+13 n}-14 q^{5+13 n}-26 q^{6+13 n}-40 q^{7+13 n}-36 q^{8+13 n}
-40 q^{9+13 n}-26 q^{10+13 n}
\right. \\ & & \left.
-14 q^{11+13 n}-12 q^{12+13 n}+18 q^{13+13 n}-4 q^{14+13 n}+11 q^{2+14 n}+31 q^{3+14 n}
+21 q^{4+14 n}+44 q^{5+14 n}
\right. \\ & & \left.
+53 q^{6+14 n}+56 q^{7+14 n}+56 q^{8+14 n}+53 q^{9+14 n}+44 q^{10+14 n}+21 q^{11+14 n}
+31 q^{12+14 n}+11 q^{13+14 n}
\right. \\ & & \left.
-4 q^{1+15 n}-2 q^{2+15 n}-q^{3+15 n}-13 q^{4+15 n}-24 q^{5+15 n}+2 q^{6+15 n}-43 q^{7+15 n}
+2 q^{8+15 n}-24 q^{9+15 n}
\right. \\ & & \left.
-13 q^{10+15 n}-q^{11+15 n}-2 q^{12+15 n}-4 q^{13+15 n}-9 q^{1+16 n}-8 q^{2+16 n}+9 q^{3+16 n}
-7 q^{4+16 n}+13 q^{5+16 n}
\right. \\ & & \left.
+13 q^{6+16 n}+13 q^{7+16 n}+13 q^{8+16 n}-7 q^{9+16 n}+9 q^{10+16 n}-8 q^{11+16 n}-9 q^{12+16 n}
-8 q^{1+17 n}-18 q^{2+17 n}
\right. \\ & & \left.
+4 q^{3+17 n}-18 q^{4+17 n}-12 q^{5+17 n}-8 q^{6+17 n}-12 q^{7+17 n}-18 q^{8+17 n}+4 q^{9+17 n}
-18 q^{10+17 n}-8 q^{11+17 n}
\right. \\ & & \left.
-18 q^{1+18 n}-31 q^{2+18 n}-7 q^{3+18 n}-47 q^{4+18 n}-37 q^{5+18 n}-37 q^{6+18 n}-47 q^{7+18 n}
-7 q^{8+18 n}-31 q^{9+18 n}
\right. \\ & & \left.
-18 q^{10+18 n}+q^{11+18 n}-3 q^{1+19 n}+q^{2+19 n}+6 q^{3+19 n}-3 q^{4+19 n}+q^{5+19 n}
-3 q^{6+19 n}+6 q^{7+19 n}+q^{8+19 n}
\right. \\ & & \left.
-3 q^{9+19 n}+7 q^{10+19 n}-3 q^{1+20 n}-16 q^{2+20 n}+4 q^{3+20 n}-17 q^{4+20 n}-17 q^{5+20 n}
+4 q^{6+20 n}-16 q^{7+20 n}
\right. \\ & & \left.
-3 q^{8+20 n}+10 q^{9+20 n}+2 q^{1+21 n}+22 q^{2+21 n}+36 q^{3+21 n}+36 q^{5+21 n}+22 q^{6+21 n}
+2 q^{7+21 n}+10 q^{8+21 n}
\right. \\ & & \left.
+4 q^{1+22 n}+14 q^{2+22 n}+13 q^{3+22 n}+13 q^{4+22 n}+14 q^{5+22 n}+4 q^{6+22 n}+7 q^{7+22 n}
-9 q^{1+23 n}+q^{2+23 n}
\right. \\ & & \left.
+28 q^{3+23 n}+q^{4+23 n}-9 q^{5+23 n}+q^{6+23 n}-10 q^{1+24 n}+6 q^{2+24 n}+6 q^{3+24 n}
-10 q^{4+24 n}-10 q^{1+25 n}
\right. \\ & & \left.
-8 q^{2+25 n}-10 q^{3+25 n}-4 q^{1+26 n}-4 q^{2+26 n}\right)
\end{eqnarray*}
}
{\small  
\begin{eqnarray*}
c_2(q,q^n)&=&
\frac{1}{q^8}\left(1+q^n\right) \left(q^2-q^{3 n}\right)^4 \left(q^4-q^{3 n}
\right)^4 \left(q^2+q^{2 n}+q^{1+n}\right)^4 \left(-4 q+q^{3 n}+6 q^{4 n}+4 q^{5 n}
+6 q^{6 n}
+q^{7 n}
\right. \\ & & \left.
-6 q^{1+n}-4 q^{1+2 n}-5 q^{1+3 n}+q^{2+3 n}+2 q^{1+4 n}+6 q^{2+4 n}-2 q^{1+5 n}+4 q^{2+5 n}
+2 q^{1+6 n}+6 q^{2+6 n}
\right. \\ & & \left.
-5 q^{1+7 n}+q^{2+7 n}-4 q^{1+8 n}-6 q^{1+9 n}-4 q^{1+10 n}\right)
\end{eqnarray*}
}
The above recursion can be used to deduce a recursion relation
for the sequence $a_n(\zeta)$ at every fixed root of unity $\zeta$, 
following and illustrating 
the proof of Theorem \ref{thm.qholo}. For example, to obtain the
recursion of the sequence $a_n(1)$, use:
\begin{eqnarray*}
c_0(q)&=& -4 ((-1 + n)^3 n^5 (106 - 230 n + 115 n^2)) (q - 1)^{10} + O((q-1)^{11})
\\
c_1(q)&=& 4 (-1 + n)^3 (-1 + 2 n) (-864 + 4470 n + 2114 n^2 - 51003 n^3 + 
   120089 n^4 - 113505 n^5 
\\ & & + 37835 n^6) (q - 1)^{10} + O((q-1)^{11})
\\
c_2(q)&=& -324 ((8 - 18 n + 9 n^2)^4 (-9 + 115 n^2)) (q - 1)^{10}
  + O((q-1)^{11})
\end{eqnarray*}
An alternative method to obtain the recursion relation of the sequence
$a_n(\zeta)$, using the WZ method, is described in the next section.

\subsection{The regular 6j-symbol at roots of unity}
\lbl{sub.qmultinomial}

The regular $6j$-symbol corresponds to the labeling $\ga=(2,2,2,2,2,2)$ and set
$$
a_n=\la  \psdraw{tetra}{0.18in},n \, \ga \ra(1), \qquad\qquad
b_n=\la  \psdraw{tetra}{0.18in},n \, \ga \ra(-1).
$$
Equation \eqref{eq.6jsum} gives 
$$
a_n
=
\sum_{k=3n}^{4n} (-1)^k\frac{(k+1)!}{(k-3n)!^4(4n-k)!^3}.
$$
Using the WZ method (implemented in {\tt Mathematica} by \cite{PaRi} 
and using it as in \cite{GV}) it follows 
that $(a_n)$ satisfies the recursion relation
$$
\psdraw{math1}{6in}
$$
This linear recursion has two complex conjugate 
formal power series solutions $a_{\pm,n}$ where
\begin{eqnarray*}
a_{+, n} &=& \frac{1}{n^{3/2}} (329 + 460 i \sqrt{2})^n \left(
1 
+ \frac{-304 - 31 i \sqrt{2}}{ 576 n} 
+ \frac{25879 + 18352 i \sqrt{2}}{331776 n^2}
+ \frac{71176912 + 6323071 i \sqrt{2}}{573308928 n^3}
\right. \\ & & \left.
+ \frac{5 (-2742864803 - 10265264480 i \sqrt{2})}{660451885056 n^4} 
+ \frac{-82823449457840 - 12750219708659 i \sqrt{2}}{380420285792256 n^5} 
\right. \\ & & \left.
+ \frac{-61273664686901989 + 213495822835779152 i \sqrt{2}}{
   657366253849018368 n^6}+ O\left(\frac{1}{n^7}\right) \right)
\end{eqnarray*}
When $q=-1$, Equation \eqref{eq.6jsum} and Lemma \ref{lem.qmultinomial} 
imply that $b_n=0$ for odd $n$, and 
$$
b_{2n}=
\sum_{k=3n}^{4n} (-1)^k\frac{k!}{(k-3n)!^4(4n-k)!^3}.
$$
It follows that the sequence $(b_{2n})$ satisfies the following
recursion relation
$$
\psdraw{math2}{6in}
$$
This linear recursion has two complex conjugate 
formal power series solutions $b_{\pm,2n}$ where
\begin{eqnarray*}
b_{+,2n} &=& \frac{1}{n^{5/2}} (329 + 460 i \sqrt{2})^n \left(
1
+ \frac{-472 - 19 i \sqrt{2}}{576 n}
+ \frac{105199 + 20632 i \sqrt{2}}{331776 n^2}
+ \frac{22386472 - 9304565 i \sqrt{2}}{573308928 n^3} 
\right. \\ & & \left.
+ \frac{-30711007135 - 48640734448 i \sqrt{2}}{660451885056 n^4}
+ \frac{-80744339543960 + 2822061829369 i \sqrt{2}}{380420285792256 n^5}
\right. \\ & & \left.
+ \frac{17678244315725891 + 219394408835134568 i 
\sqrt{2}}{657366253849018368 n^6}
+  O\left(\frac{1}{n^7}\right) \right)
\end{eqnarray*}


\section{Open problems}
\lbl{sec.open}

Fix a cubic ribbon graph $\Ga$ with edge set $E(\Ga)$, a complex root of 
unity $\zeta$, and consider the {\em generating series}
$$
S_{\Ga,\zeta}=
 \sum_{\ga} \lla\Ga, \ga \rra(\zeta) \prod_{e \in E(\Ga)} 
z_e^{\ga_e} \in \BQ(\zeta)[[z_{e}, e \in E(\Ga)]]
$$ 
In \cite[Thm.5]{GV}, we proved that when $\zeta=1$, 
$S_{\Ga,1}$ is a rational function, i.e., belongs to the field 
$\BQ(z_{e}, e \in E(\Ga))$. It was explained in \cite[Thm.6]{GV}
that the generating series $F_{\Ga,\ga,\zeta}$ \eqref{eq.FGa} is a diagonal of 
$S_{\Ga,\zeta}$, and consequently the rationality of $S_{\Ga,\zeta}$ 
implies that 
$F_{\Ga,\ga,\zeta}$ is a $G$-function. The rationality of $S_{\Ga,\zeta}$ 
follows
from the so-called {\em chromatic evaluation} of classical spin networks.
For details, see \cite{GV} and also \cite{We}. The chromatic evaluation
seems to break down in the case of complex roots of unity.

\begin{question}
\lbl{que.1}
Is it true that for all cubic ribbon graphs $\Ga$ and all complex
roots of unity, $S_{\Ga,\zeta}$ is a rational function, i.e., belongs to
the field $\BQ(\zeta)(z_{e}, e \in E(\Ga))$?
\end{question}

\bibliographystyle{hamsalpha}\bibliography{biblio}
\end{document}